\newtheorem{theorem}{Theorem}
\newtheorem{proposition}{Proposition}
\theoremstyle{definition}
\newtheorem*{definition}{Definition}
\theoremstyle{remark}
\newcommand{\F}{{\mathbb{F}}}
\newcommand{\R}{{\mathbb{R}}}
\newcommand{\C}{{\mathbb{C}}}
\newcommand{\SC}{{\mathcal{S}}}
\newcommand{\LC}{{\mathcal{L}}}
\begin{document}

\title{Generalizations of Joints Problem}
\author{Ben Yang}
\date{}
\maketitle

\begin{abstract}
We generalize the joints problem to sets of varieties and prove almost sharp bound on the number of joints. As a special case, given a set of $N$ $2$-planes in $\R^6$, the number of points at which three $2$-planes intersect and span $\R^6$ is at most $CN^{3/2+\epsilon}$. We also get almost sharp bound on the number of joints with multiplicities. The main tools are polynomial partitioning and induction on dimension. 
\end{abstract}
\section{Introduction}

Various authors have considered the joints problem. It asks, given $N$ lines in $\R^3$, how many "joints" can there be, where a joint is a point at which at least three non-coplanar lines intersect. This problem first appeared in \cite{CEG+}, where it was proved that the number of joints is $O(N^{7/4})$. Later on progress was made in improving the bound by Sharir \cite{S}, Sharir and Welzl \cite{SW}, and Feldman and Sharir \cite{FS}. Wolff \cite{W} observed a connection between the problem of counting joints to the Kakeya problem. Bennett, Carbery and Tao \cite{BCT} exploited this connection and proved an upper bound conditioned on the angles at the joints. 

It has long been conjectured that the correct upper bound on the number of joints is $O(N^{3/2})$. matching the lower bound one can get by considering axis-parallel lines in a $\sqrt{N}\times \sqrt{N}\times \sqrt{N}$ grid. Guth and Katz \cite{GK1} settled this conjecture in the affirmative, showing that the number of joints (for lines in $\R^3$) is indeed $O(N^{3/2})$.
\begin{theorem}[Guth and Katz]
Any set of $N$ lines in $\R^3$ form $O(N^{\frac{3}{2}})$ joints.
\end{theorem}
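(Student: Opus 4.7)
The plan is to combine a polynomial-method construction with an iterative removal argument, in the spirit of Dvir's method.

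First I would prove the following key lemma: \emph{every configuration of $N$ lines in $\R^3$ forming $J$ joints contains at least one line on which at most $C J^{1/3}$ joints lie}. To see this, let $P$ be a non-zero polynomial on $\R^3$ of minimum degree $d$ vanishing at every joint. Such a $P$ exists with $d \leq C J^{1/3}$ by dimension counting: the space of polynomials of degree at most $d$ has dimension $\binom{d+3}{3} \sim d^3/6$, so for a suitable $C$ the $J$ vanishing conditions do not exhaust the space. Suppose for contradiction that every line of the configuration carries more than $d$ joints. Then on each such line $\ell$, $P|_\ell$ is a univariate polynomial of degree at most $d$ with more than $d$ zeros, hence identically zero, so $\ell \subset \{P=0\}$. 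At any joint $p$, three non-coplanar lines through $p$ lie in $\{P=0\}$, so the directional derivative of $P$ at $p$ vanishes in three linearly independent directions, forcing $\nabla P(p) = 0$. Since $P$ is non-constant (a non-zero constant cannot vanish at any joint), some partial derivative $\partial_i P$ is a non-zero polynomial of degree $d-1$ vanishing at every joint, contradicting the minimality of $d$.

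Next I would iterate: repeatedly remove a line carrying at most $C J_i^{1/3}$ joints, where $J_i$ is the current joint count. When such a line is removed, a point can cease to be a joint only if its triple of non-coplanar lines used the removed one, so at most $C J_i^{1/3}$ joints are destroyed, giving $J_{i+1} \geq J_i - C J_i^{1/3}$. After at most $N$ removals no lines remain, hence no joints, so the total loss telescopes to $J \leq \sum_{i=0}^{N-1} C J_i^{1/3} \leq C N J^{1/3}$, yielding $J \leq C' N^{3/2}$.

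The conceptual crux is the lemma: it is the simultaneous interplay of the minimality of $\deg P$, the univariate fundamental theorem of algebra along each line, and the geometric fact that $\nabla P$ is normal to $\{P=0\}$ that pins down the existence of a low-incidence line. The removal step is comparatively routine, though one must carefully verify that the loss of joints from each removal is indeed controlled by the joint count on the removed line rather than by some larger quantity.
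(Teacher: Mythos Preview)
Your argument is correct and is essentially the streamlined Guth--Katz proof (in the form given by Quilodr\'{a}n and by Kaplan--Sharir--Shustin): take a minimal-degree polynomial vanishing on the joints, force every line into the zero set if no light line exists, kill the gradient at every joint, and contradict minimality; then iterate the light-line removal. The only cosmetic point is the sentence ``a point can cease to be a joint only if its triple of non-coplanar lines used the removed one'': what you really use is that a destroyed joint must lie \emph{on} the removed line, which carries at most $CJ_i^{1/3}$ joints, so the bound $J_i - J_{i+1}\le CJ_i^{1/3}$ follows regardless of how many triples witness the joint.

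Note, however, that the paper does not give its own proof of this theorem: Theorem~1 is stated as a result of Guth and Katz and cited to \cite{GK1}. The paper's own techniques are showcased on the weaker Theorem~\ref{joints with epsilon}, which gives only $O(N^{3/2+\epsilon})$ and is proved by a genuinely different route: polynomial partitioning with a constant-degree polynomial $D=D(\epsilon)$, induction on the number of lines to handle the cells, and an induction-on-dimension / singular-locus argument to handle joints on $Z(P)$. So your proof and the paper's method are not the same. Your degree-reduction argument is sharper (no $\epsilon$ loss) and more elementary, but it relies crucially on the fact that a degree-$d$ polynomial not vanishing on a line has at most $d$ zeros there; this breaks down for higher-dimensional varieties, which is precisely why the paper abandons it in favor of partitioning when generalizing to $\alpha_i$-dimensional varieties in Theorem~\ref{main}. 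Conversely, the partitioning approach accepts the $\epsilon$ loss in exchange for a framework that survives when ``lines'' are replaced by planes or general varieties.
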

 The proof was an adaptation of Dvir's argument in \cite{D} for the solution of the finite field Kakeya problem, which involves working with the zero set of a polynomial.

Since then generalizations of the joints problem have been studied by many authors. Quilodr\'{a}n \cite{Q} and Kaplan, Sharir and Shustin \cite{KSS} generalized the joints problem to all dimensions, showing that given $N$ lines in $\R^n$, there are at most $O(N^{\frac{n}{n-1}})$ points at which $n$ lines with linearly independent directions intersect. Their proofs also work for algebraic curves with uniformly bounded degree. It is also known that similar results hold if one replaces $\R$ with any field $\F$ and the proof is similar. 

In \cite{I1} and \cite{I2} Iliopoulou proved a multi-version of the joints problem. A point $x\in \F^n$ is a multijoint formed by the finite collection $\mathcal{L}_1,\dots ,\mathcal{L}_n$ of lines in $\F^n$ if there exist at least $n$ lines through $x$, one from each collection, spanning $\F^n$. She proved that there are $O((|\mathcal{L}_1|\cdots|\mathcal{L}_n|)^{\frac{1}{n-1}})$ multijoints for $\F=\R$ and any $n\geq 3$, and for any field $\F$ and $n=3$. The bounds for multijoints problem will imply the bounds for original joints problem if we simply take those $n$ sets of lines to be the same set. The result also generalizes to real algebraic curves. 

In \cite{I3} and \cite{I4} Iliopoulou also studied the problem of counting joints (and multijoints) with multiplicities. It is conjectured by Carbery (and possibly some other people) that, for any transversal collections $\mathcal{L}_1,\dots ,\mathcal{L}_n$ of $L_1,\dots ,L_n$ lines in $\R^n$ (transversal in the sense that, whenever $n$ lines, one from each collection, meet at a point, then they form a joint there), we have
\[
\sum\limits_{x}(\prod\limits_{1\leq i\leq n}N_i(x))^{\frac{1}{n-1}}=O((L_1\cdots L_n)^{\frac{1}{n-1}})
\] 
where $N_i(x)$ is the number of lines of $\mathcal{L}_i$ passing through $x$. The sum is taken over all joints (for a non-joint $x$, $\prod\limits_{1\leq i\leq n}N_i(x)$ would be $0$ anyway). This is a discrete analogue of Guth's endpoint multilinear Kakeya theorem. In \cite{I3} Iliopoulou essentially proved this conjecture in $\R^3$ and it seems hard to generalize the proof to higher-dimensional space due to the use of some facts from computational geometry that hold only in $\R^3$. Hablicsek \cite{H} proved the "non-multi-version" of this conjecture under the condition that the lines through a joint are in generic positions, but for arbitrary field.% More precisely, he proved the following:

%\begin{theorem}[Hablicsek]\label{hablicsek}
%Let $\mathcal{L}$ be a finite set of lines in $\F ^n$ of size $L$ where $\F$ is an arbitrary field. We say $x$ is a joint formed by $\mathcal{L}$ if there exist at least $n$ lines going through $x$ whose direction vectors are linearly independent. Assume that at each joint $x$ any $n$-tuple of lines in $\mathcal{L}$ passing through $x$ form a joint. Then,
%\[
%\sum\limits_{x} %(N(x))^{\frac{1}{n-1}}=O(L^{\frac{n}{n-1}})
%\]
%where $N(x)$ is the number of lines of $\mathcal{L}$ %going through $x$.
%\end{theorem}   
So far to the best of the author's knowledge all the generalizations of joints problem are about lines or curves which are $1$-dimensional objects. The reason is that in all of those proofs polynomial method is used and one has to bound the number of intersections of a line (or an algebraic curve) with the zero set of a polynomial of certain degree when the line is not contained in the zero set. However, if we replace the line or curve with a higher-dimensional object (like a $2$-plane), the number of intersections of that object with the zero set of a polynomial could be infinite even if the object is not contained in the zero set.

In this paper we generalize the joints problem to higher-dimensional objects. The following is the definition of a joint (from now on when we say joint, we mean multijoint) formed by collections of varieties in $\R^n$. 

\begin{definition}
	Suppose $\SC _i$ is a set of $\alpha _i$-dimensional algebraic varieties in $\R ^n$, $1\leq i\leq k$, $k\geq 2$ and $\sum_{i=1}^{k} \alpha _i=n\geq 2$. A point $x$ is a joint if 
	\begin{enumerate}
		\item for each $i$ there exists $S_i \in \SC _i$ such that $x$ is a smooth point of $S_i$ ;
		\item the tangent spaces of $S_i$ at $x$ for all $i$ span $\R ^n$.
	\end{enumerate}
\end{definition}

The main result of this paper is the following almost sharp bound on the number of joints formed by sets of varieties:  

\begin{theorem}[Main Theorem]\label{main}
	Suppose $\SC _i$ is a non-empty set of $\alpha _i$-dimensional algebraic varieties in $\R ^n$, $1\leq i\leq k$, $k\geq 2$ and $\sum_{i=1}^{k} \alpha _i=n\geq 2$, $\alpha _i<n$. We also assume that each variety is defined by at most $m$ polynomial equations each of degree at most $d$. Then for any $\epsilon >0$, there exists a constant $C(n,m,d,\epsilon)$ such that the number of joints formed by those sets of varieties is bounded by $C(n,m,d,\epsilon)(\prod_{i=1}^{k} |\SC_i|)^{\frac{1}{k-1}+\epsilon}$.
\end{theorem}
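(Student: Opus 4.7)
I would argue by polynomial partitioning combined with a double induction: on the ambient dimension $n$, and at fixed $n$ on the total number $J$ of joints. The high-level template is the cellular/algebraic split introduced by Guth--Katz and now standard for joints-type problems; the new feature, forced by the objects $S_i$ being higher-dimensional, is that the algebraic case cannot be closed by a vanishing-lemma and must instead be reduced to an $(n-1)$-dimensional instance of the same theorem.

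Fix $\epsilon>0$ and a parameter $D$ (large in terms of $n,m,d,\epsilon$, to be chosen at the end). By Guth's polynomial partitioning applied to the joint set, there is a nonzero polynomial $P$ with $\deg P\leq D$ such that $\R^n\setminus Z(P)$ has $O(D^n)$ connected components (``cells''), each containing at most $J/D^n$ joints. Partition the joints into cellular ones (off $Z(P)$) and algebraic ones (on $Z(P)$). For the cellular joints, a real-algebraic B\'ezout argument shows that an $\alpha_i$-dimensional variety $S\in\SC_i$ of complexity bounded in $(m,d)$ enters at most $O(D^{\alpha_i})$ cells, so that $\sum_\Omega|\SC_i^{(\Omega)}|\lesssim|\SC_i|D^{\alpha_i}$. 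Each cell is a strictly smaller instance of the problem, so the inductive hypothesis applies there. Summing the per-cell bounds, using $\sum_i\alpha_i=n$ together with the partition estimate $J_\Omega\leq J/D^n$ inside a H\"older-type interpolation, the cellular contribution is bounded by $C(\prod_i|\SC_i|)^{1/(k-1)+\epsilon}D^{-c(k,\epsilon)}$ for some $c(k,\epsilon)>0$. Taking $D$ large enough makes this $\leq J/2$, and this half can be absorbed into the target.

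For the algebraic joints the induction is on $n$. At a smooth point $x\in Z(P)$ that is a joint, some $S_i$ through $x$ cannot lie in $Z(P)$: otherwise all tangent spaces $T_xS_i\subset T_xZ(P)$ would be confined to an $(n-1)$-dimensional subspace, contradicting the spanning hypothesis. Replace such an $S_i$ by the transverse intersection $S_i\cap Z(P)$, of dimension $\alpha_i-1$; the new tangent spaces span $T_xZ(P)$, so $x$ is a joint of a configuration with dimensions $\alpha_1,\ldots,\alpha_i-1,\ldots,\alpha_k$ summing to $n-1$ inside the hypersurface $Z(P)$. Apply the theorem inductively in ambient dimension $n-1$ (using a version formulated for varieties living in a hypersurface, or after a generic linear projection onto $\R^{n-1}$). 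Joints lying on the singular locus of $Z(P)$ are dispatched by iterating the argument on a polynomial of strictly lower degree defining that locus.

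The main obstacle is the algebraic step. One must set up the induction hypothesis so that (i) replacing $S_i$ by $S_i\cap Z(P)$, which a priori raises the defining-equation degree to roughly $dD$, does not destroy the $(m,d)$-bookkeeping across the recursion on $n$; and (ii) the $D^{-c(k,\epsilon)}$ slack harvested in the cellular step comfortably dominates the accumulated constants from recursion. Handling (i) forces one to allow the complexity parameters to grow through the induction and to absorb that growth into $C(n,m,d,\epsilon)$, while (ii) is precisely what necessitates the $\epsilon$ loss in the final exponent; the remaining ingredients -- partitioning, cell-crossing counts, and the H\"older combinatorics -- are by now standard.
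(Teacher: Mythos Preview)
Your plan is correct and follows the same overall template as the paper: constant-degree polynomial partitioning, an induction on size for the cellular joints, and an induction on the ambient dimension (intersect with $Z(P)$, then project generically to $\R^{n-1}$) for the algebraic joints, with the singular locus handled by iterating on a lower-dimensional stratum. The paper packages the algebraic step as an auxiliary statement (Theorem~\ref{induction}: joints constrained to lie on a fixed variety of dimension $<n$), but the content is exactly your intersect-and-project reduction, and your remarks about absorbing the growth of the complexity parameters $(m,d)$ into the constant match what the paper does.

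The one substantive difference is in the partitioning and the induction variable. You partition for the \emph{joint set} (Theorem~\ref{poly partition}), so that per cell you know $J_\Omega\le J/D^n$ and, via Barone--Basu, only the \emph{total} crossing bound $\sum_\Omega|\SC_i^{(\Omega)}|\lesssim|\SC_i|D^{\alpha_i}$; you then induct on $J$. The paper instead partitions for the \emph{families of varieties} using Theorem~\ref{BBZ}, which gives the \emph{per-cell} bound $|\SC_i^{(\Omega)}|\lesssim|\SC_i|D^{\alpha_i-n}$, and inducts on the number of varieties. With the per-cell variety count, the cellular sum $\sum_\Omega C_1(\prod_i|\SC_i^{(\Omega)}|)^{1/(k-1)+\epsilon}$ is immediately $\le C_1\cdot CD^{-n(k-1)\epsilon}(\prod_i|\SC_i|)^{1/(k-1)+\epsilon}$, which is $<\tfrac{1}{10}C_1(\prod_i|\SC_i|)^{1/(k-1)+\epsilon}$ for $D$ large; no appeal to $J$ is needed. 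Your route also closes, but be aware that your sentence ``the cellular contribution is bounded by $C(\prod_i|\SC_i|)^{1/(k-1)+\epsilon}D^{-c}$'' is not literally what the H\"older interpolation produces: with only a total crossing bound one obtains something of the shape $C_1^{1/(kp)}J^{\theta}D^{-c'}(\prod_i|\SC_i|)^{1/k}$ (where $p=\tfrac{1}{k-1}+\epsilon$ and $\theta=1-\tfrac{1}{kp}$), and the induction closes only after a short bootstrap in $J$. So either make that bootstrap explicit, or switch to variety partitioning as the paper does and get the cellular estimate in one line.
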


In particular, it implies the following joints problem for $2$-planes in $\R^6$ as a special case:

\begin{theorem}\label{2-plane}
	Suppose $\SC_1$, $\SC_2$ and $\SC_3$ are three sets of $2$-planes in $\R^6$. Then for any $\epsilon > 0$, there exists a constant $C(\epsilon)$ such that the number of points at which at least three $2$-planes, one from each $\SC_i$, intersect and span $\R^6$ is bounded by $C(\epsilon)(|\SC_1||\SC_2||\SC_3|)^{\frac{1}{2}+\epsilon}$.
\end{theorem}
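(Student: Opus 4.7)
The plan is to derive Theorem \ref{2-plane} directly as a specialization of the Main Theorem (Theorem \ref{main}). The setup matches perfectly: a $2$-plane in $\R^6$ is a $2$-dimensional algebraic variety, and the hypothesis $\sum_{i=1}^{k}\alpha_i=n$ with $\alpha_i<n$ is satisfied by $k=3$, $n=6$, $\alpha_1=\alpha_2=\alpha_3=2$. Since every affine $2$-plane in $\R^6$ is cut out by $4$ linear equations, we may take $m=4$ and $d=1$ in the statement of Theorem \ref{main}, so the constant $C(n,m,d,\epsilon)=C(6,4,1,\epsilon)$ depends only on $\epsilon$.

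Next, I would verify that the notion of joint from the Definition reduces, in this setting, to the condition in Theorem \ref{2-plane}. A $2$-plane is smooth at every one of its points, so condition (1) of the Definition is automatic whenever $x$ lies on one plane from each $\SC_i$. At such an $x$, the tangent space to a $2$-plane $S_i$ is just the linear translate of $S_i$ itself, and so condition (2) -- that the three tangent spaces span $\R^6$ -- is exactly the statement that the three $2$-planes span $\R^6$. Thus the points being counted in Theorem \ref{2-plane} are precisely the joints in the sense of the Definition.

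Having matched the hypotheses, the Main Theorem immediately yields the bound
\[
C(6,4,1,\epsilon)\,(|\SC_1||\SC_2||\SC_3|)^{\frac{1}{3-1}+\epsilon}
=C(\epsilon)\,(|\SC_1||\SC_2||\SC_3|)^{\frac{1}{2}+\epsilon},
\]
which is exactly the claimed estimate. In this sense Theorem \ref{2-plane} has no independent obstacle: the entire analytic content, namely the polynomial partitioning and induction on dimension needed to control joints of higher-dimensional varieties, is deferred to the proof of Theorem \ref{main}. The only thing worth double-checking here is the bookkeeping of the parameters $m$ and $d$, and in particular that the constant in Theorem \ref{main} is genuinely allowed to depend on them so that fixing $m=4, d=1$ absorbs everything into a single $\epsilon$-dependent constant.
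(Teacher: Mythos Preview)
Your proposal is correct and matches the paper's approach: the paper simply presents Theorem~\ref{2-plane} as an immediate special case of Theorem~\ref{main} without a separate proof, and your parameter-matching ($n=6$, $k=3$, $\alpha_i=2$, $m=4$, $d=1$) together with the observation that joints in the sense of the Definition coincide with the spanning triples of $2$-planes is exactly the intended derivation.
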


Comments: We can construct almost sharp examples for our main theorem by considering the axis-parallel example of the joints problem for lines in $R^{k}$ and turning each family of lines into a family of $\alpha_i$-planes.

Moreover, a simple modification of our proof of Theorem \ref{main} will almost answer Carbery's conjecture affirmatively, with a loss of $\epsilon$ in the power:

\begin{theorem}\label{multijoints with multiplicities}
	For any transversal collections $\mathcal{L}_1,\dots ,\mathcal{L}_n$ of $L_1,\dots ,L_n$ lines in $\R^n$ (transversal in the sense that, whenever $n$ lines, one from each collection, meet at a point, then they form a joint there) and any $\epsilon >0$, we have
\[
\sum\limits_{x}(\prod\limits_{1\leq i\leq n}N_i(x))^{\frac{1}{n-1}}=O((L_1\cdots L_n)^{\frac{1}{n-1}+\epsilon})
\] 
where $N_i(x)$ is the number of lines of $\mathcal{L}_i$ passing through $x$. The sum is taken over all joints.

\end{theorem}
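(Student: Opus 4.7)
The plan is to mirror the proof of Theorem \ref{main} but track the weighted sum
$W := \sum_x \prod_{i=1}^n N_i(x)^{1/(n-1)}$
in place of the raw joint count, and to induct on $L = L_1 + \cdots + L_n$. For the inductive step I would apply polynomial partitioning with a polynomial $P$ of degree $D = (\prod_i L_i)^{\delta}$, where $\delta > 0$ is small (to be chosen in terms of $\epsilon$), decomposing $\R^n\setminus Z(P)$ into $\lesssim D^n$ open cells with $\lesssim L_i/D^{n-1}$ lines of $\mathcal{L}_i$ in each cell (since a line not contained in $Z(P)$ meets $Z(P)$ in at most $D$ points and thus enters at most $D+1$ cells). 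A joint in the interior of a cell has the same multiplicities whether computed globally or from the sub-collections meeting the cell, so applying the induction cell-by-cell yields an interior contribution of
\begin{equation*}
\lesssim C(\epsilon)\, D^{n}\prod_i (L_i/D^{n-1})^{1/(n-1)+\epsilon} = C(\epsilon)\, D^{-n(n-1)\epsilon}\prod_i L_i^{1/(n-1)+\epsilon},
\end{equation*}
which is at most $\tfrac12 C(\epsilon)\prod_i L_i^{1/(n-1)+\epsilon}$ once $L$ is large.

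For joints on $Z(P)$, I split $N_i(x) = N_i^{\mathrm{in}}(x) + N_i^{\mathrm{out}}(x)$ according as the line through $x$ is contained in $Z(P)$ or transversal to it. Since $1/(n-1)\leq 1$, the subadditivity inequality $(a+b)^{1/(n-1)} \leq a^{1/(n-1)} + b^{1/(n-1)}$ combined with expansion of the product reduces the problem to estimating, for each of the $2^n$ profiles $\sigma \in \{\mathrm{in},\mathrm{out}\}^n$, the sum $\sum_x \prod_i N_i^{\sigma_i}(x)^{1/(n-1)}$. In the all-transversal profile each transversal line meets $Z(P)$ in at most $D$ points, so $\sum_x N_i^{\mathrm{out}}(x)\leq D L_i$, and combining with $N_i^{\mathrm{out}}(x)\leq L_i$ and H\"older's inequality gives
\begin{equation*}
\sum_x\prod_i N_i^{\mathrm{out}}(x)^{1/(n-1)} \leq \prod_i\Bigl(\sum_x N_i^{\mathrm{out}}(x)^{n/(n-1)}\Bigr)^{1/n} \leq D\prod_i L_i^{1/(n-1)},
\end{equation*}
which is $\leq \prod_i L_i^{1/(n-1)+\epsilon}$ whenever $\delta \leq \epsilon$.

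For any profile with at least one in-coordinate, the configuration localizes on the $(n{-}1)$-dimensional variety $Z(P)$: the in-lines lie inside $Z(P)$ and, together with $Z(P)$ itself, form an instance of the generalized joints problem in one fewer ambient dimension. Here I would rerun the same weighted argument with polynomial partitioning performed inside $Z(P)$, or equivalently feed this configuration into the dimension descent already present in the proof of Theorem \ref{main}, at the cost of an additional factor of $D^{O(1)}\leq(\prod_i L_i)^{O(\delta)}$ that is absorbed into the $\epsilon$-loss. Summing over all $2^n$ profiles and combining with the interior contribution, the induction closes provided $\delta>0$ is chosen small enough and $C(\epsilon)$ large enough.

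The main obstacle is precisely this mixed-profile step: running the dimension descent onto $Z(P)$ with multiplicity weights. The proof of Theorem \ref{main} already executes such a descent for the unweighted joint count via a nested polynomial partitioning, and what has to be verified is that the H\"older-type bookkeeping governing the split between interior cells, transversal boundary, and contained boundary transports cleanly when the count is replaced by the weighted sum $\sum_x \prod_i N_i(x)^{1/(n-1)}$. This is the ``simple modification'' referred to in the introduction, and once carried out the remaining parameter choices and combinatorics go through verbatim.
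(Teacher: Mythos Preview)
Your proposal diverges from the paper's argument in two structural ways, and the second of these is a genuine gap.

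First, the paper partitions with a polynomial of \emph{constant} degree $D$ depending only on $n$ and $\epsilon$ (chosen so that $CD^{-n(n-1)\epsilon}<\tfrac12$), not with $D=(\prod_i L_i)^\delta$. This keeps the complexity of $Z(P)$ bounded uniformly, so that the constants arising in the dimension descent depend only on $n$ and $\epsilon$, and no $D^{O(1)}$ loss has to be absorbed anywhere.

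Second, and more importantly, the paper does not expand into $2^n$ in/out profiles. It uses the transversality hypothesis in a sharper way: at any joint $x$ that is a smooth point of a component variety $V$ of $Z(P)$, there must exist an index $i$ such that \emph{no} line of $\LC_i$ through $x$ lies in $V$ (otherwise one could pick one contained line from each collection and violate transversality in the tangent hyperplane). Partitioning the smooth joints on $V$ by which index enjoys this property, say $i=1$ on the set $J_1$, one gets $\sum_{x\in J_1}N_1(x)\le C L_1$, and then H\"older in the form
\[
\sum_{x\in J_1}\Bigl(\prod_{i=1}^n N_i(x)\Bigr)^{\frac{1}{n-1}}
\le \Bigl(\sum_{x\in J_1}N_1(x)\Bigr)^{\frac{1}{n-1}}
\Bigl(\sum_{x\in J_1}\bigl(\textstyle\prod_{i=2}^n N_i(x)\bigr)^{\frac{1}{n-2}}\Bigr)^{\frac{n-2}{n-1}}
\]
reduces the second factor to an $(n-1)$-dimensional weighted multijoints problem for the full counts $N_2,\dots,N_n$, handled by a generic projection to $\R^{n-1}$ and induction on $n$. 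That is the entire ``simple modification''.

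Your profile decomposition misses exactly this. For a mixed profile such as $(\mathrm{in},\mathrm{out},\dots,\mathrm{out})$ you assert that the configuration ``localizes on $Z(P)$'', but the factors $N_i^{\mathrm{out}}(x)$ for $i\ge 2$ count lines \emph{not} contained in $Z(P)$, so there is no lower-dimensional weighted joints instance sitting inside $Z(P)$ to which one could apply the descent; nor do the in-lines from a proper subset of the collections form joints in any hyperplane. What makes the paper's descent go through is that transversality hands you one index whose \emph{entire} multiplicity $N_i(x)$ is transversal, so that factor peels off via H\"older while the remaining $n-1$ \emph{full} counts constitute a bona fide instance one dimension down. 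Fragmenting each $N_i$ into in/out pieces destroys this structure, and your proposal offers no mechanism to reassemble the mixed terms.
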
 

{\bf Acknowledgments.} I would like to thank my adviser Larry Guth, who has been very patient with me, for all the helpful conversations we had over this project. I would also like to thank David Jerison, for part of this research was done while I was supported on his NSF Grant DMS 1069225. 
\section{Main tools}
In Guth and Katz's celebrated work \cite{GK2} on the distinct distances conjecture, they developed the polynomial partitioning method which has been proven to be an extremely powerful tool in incidence geometry. Here we recall the statement of the partitioning theorem.

\begin{theorem}[Guth and Katz]\label{poly partition}
For each dimension $n$ and each degree $D$, the following holds. For any finite set $S\subset \R^n$, we can find a non-zero polynomial $P$ of degree at most $D$ so that $\R^n\backslash Z(P)$ is a union of disjoint open sets $O_i$, and for each of these sets,
\[
|S\cap O_i| \leq C_nD^{-n}|S|
\]
\end{theorem}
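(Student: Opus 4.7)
The plan is to prove this via two ingredients: a \emph{polynomial ham sandwich theorem}, and an iterative bisection argument. For the first ingredient, I would show that given any $K$ finite sets $U_1,\dots,U_K\subset\R^n$ with $K<\binom{n+D}{n}$, there is a nonzero polynomial $P$ of degree at most $D$ such that each of the open regions $\{P>0\}$ and $\{P<0\}$ contains at most $|U_j|/2$ points of $U_j$ for every $j$. This follows from the classical ham sandwich theorem (itself a consequence of Borsuk--Ulam) applied after the Veronese-type embedding $v_D\colon\R^n\to\R^{\binom{n+D}{n}-1}$ whose coordinates are the non-constant monomials of degree at most $D$: polynomials of degree at most $D$ on $\R^n$ pull back from affine functionals on the image, so bisecting half-spaces in the image correspond to degree-at-most-$D$ bisecting polynomials on $\R^n$.

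Given polynomial ham sandwich, I iterate it. Set $\mathcal{U}_0:=\{S\}$. At step $j\ge 1$, assume inductively that $\mathcal{U}_{j-1}$ is a collection of at most $2^{j-1}$ finite subsets of $S$ (the nonempty intersections of $S$ with the open cells produced so far), each of size at most $|S|/2^{j-1}$. Choose $d_j$ to be the smallest integer with $\binom{n+d_j}{n}>2^{j-1}$, so $d_j\asymp 2^{(j-1)/n}$; apply ham sandwich to produce a nonzero polynomial $P_j$ of degree at most $d_j$ that simultaneously bisects every set in $\mathcal{U}_{j-1}$. Intersecting each existing cell with $\{P_j>0\}$ and $\{P_j<0\}$ yields the next collection $\mathcal{U}_j$, of at most $2^j$ sets each of size at most $|S|/2^j$. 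Stop at the largest step $J$ for which $\sum_{j=1}^J d_j\le D$; since $d_j\asymp 2^{(j-1)/n}$, a geometric-sum estimate gives $2^J\asymp D^n$. The product $P:=P_1P_2\cdots P_J$ then has degree at most $D$, and every connected component of $\R^n\setminus Z(P)$ lies inside some cell at level $J$, hence contains at most $|S|/2^J\lesssim |S|/D^n$ points of $S$, as required.

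The main obstacle I anticipate is the polynomial ham sandwich step itself. The classical ham sandwich theorem is usually stated for absolutely continuous measures, whereas we need it for counting measures on finite sets; the standard workaround is to thicken each point to a small open ball of uniform volume, apply the continuous ham sandwich on the Veronese image, and extract a limit polynomial as the radii shrink, using compactness on the projective space of degree-$\le D$ coefficient vectors to ensure the limit is nonzero. A secondary subtlety is that points of $S$ lying on $Z(P)$ itself are not counted in any open cell $O_i$, but this is harmless since the theorem only bounds $|S\cap O_i|$ rather than asking for a true partition of $S$. Tracking the implicit constants through the geometric series for the degrees and the $n!$ in the leading coefficient of $\binom{n+d}{n}$ yields a $C_n$ depending only on $n$.
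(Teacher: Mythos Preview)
The paper does not give its own proof of this theorem; it is merely recalled from \cite{GK2} as a tool. Your proposal is correct and is essentially the original Guth--Katz argument (polynomial ham sandwich via the Veronese embedding, iterated with geometrically growing degrees), so there is nothing to compare.
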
 

Polynomial partitioning is a very useful divide and conquer argument. The point set $S$ is divided into a part in each cell plus a part in the surface $Z(P)$ and we estimate these contributions separately and then add up the results.

The polynomial partitioning technique can be applied to give new proofs of some classical results in incidence geometry. We refer the interested reader to \cite{KMS} which gives a very good exposition of the topic.

In \cite{G} Guth proved the following generalization of the partitioning theorem. Instead of a finite set of points, we consider a finite set of varieties. It could be a set of circles, a set of $d$-planes, etc. We would like to partition $\R^n$ with a degree $D$ polynomial $P$ so that each component of $\R^n\backslash Z(P)$ only intersects a small number of our varieties.    

\begin{theorem}[Guth]\label{poly partition for variety}
Suppose $\Gamma$ is a set of $k$-dimensional varieties in $\R^n$, each defined by at most $m$ polynomial equations of degree at most $d$. For any $D\geq 1$, there is a non-zero polynomial $P$ of degree at most $D$, so that each connected component of $\R^n\backslash Z(P)$ intersects at most $C(d,m,n)D^{k-n}|\Gamma|$ varieties in $\Gamma$.  
\end{theorem}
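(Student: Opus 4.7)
The plan is to adapt the iterated polynomial ham sandwich scheme underlying Theorem~\ref{poly partition} and combine it with a Milnor--Thom-type bound on how many cells each $k$-variety can cross. The ham sandwich side produces, for any fixed finite family of probability measures, a polynomial of degree $O(D)$ whose $\leq D^n$ complementary cells equidistribute each measure; the Milnor--Thom side converts ``balanced measure'' into a balanced count of varieties hitting each cell.

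First I would establish the Milnor--Thom ingredient. For a $k$-dimensional $\gamma \in \Gamma$ cut out by $\leq m$ polynomials of degree $\leq d$ and any $P$ of degree $\leq D$ with $\gamma \not\subset Z(P)$, the number of connected components of $\gamma \setminus Z(P)$ is $\leq C(d,m,n)D^k$ by the refined Oleinik--Petrovsky/Milnor bound applied to the system defining $\gamma$ together with $P$. Hence each such $\gamma$ meets at most $C(d,m,n)D^k$ cells of $\R^n \setminus Z(P)$, and summing over $\Gamma$ gives
\[
\sum_\Omega \#\{\gamma \in \Gamma : \gamma \cap \Omega \neq \emptyset\} \;\leq\; C(d,m,n)\,|\Gamma|\,D^k.
\]
Since there are $O(D^n)$ cells, the target bound $C|\Gamma|D^{k-n}$ is already achieved \emph{on average}; the remaining task is to upgrade average to maximum. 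For this I would build $P = P_1 \cdots P_s$ by iterated polynomial ham sandwich: starting from the trivial one-cell partition, at step $j+1$ apply ham sandwich simultaneously across the $2^j$ current cells to produce $P_{j+1}$ of degree $O(2^{j/n})$ bisecting a chosen family of measures $\mu_i$ (one per variety) within each cell. After $s = n\lceil\log_2 D\rceil$ steps the total degree is $O(D)$, there are $\leq D^n$ cells, and $\mu_i(\Omega) \leq 2^{-s}$ in each cell. To translate the measure-balance into count-balance I would take $\mu_i$ to be uniform on a carefully distributed set of $\Theta(D^k)$ representative points on $\gamma_i$, chosen (by a thickening or genericity argument) so that every component of $\gamma_i \setminus Z(P)$ contains a representative; then each cell meeting $\gamma_i$ contains one, and the ham-sandwich balance forces $\leq C|\Gamma|D^{k-n}$ such representatives, hence the same bound on distinct $\gamma_i$.

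The hard part will be exactly this bridge from a balanced measure to a balanced incidence count. A variety may enter a cell only along an arbitrarily small sliver of negligible Hausdorff measure, so a naive ``natural'' measure on $\gamma_i$ does not balance incidences at all. The technical heart of the proof is therefore the choice of a measure (or discrete sample set) that is simultaneously amenable to ham-sandwich bisection \emph{and} reliably witnesses whether $\gamma_i$ meets each cell, which is what forces the $\Theta(D^k)$-scale of representatives and the accompanying perturbation argument. Secondary care is needed to handle the degenerate case $\gamma_i \subset Z(P)$ (by a small perturbation or a separate bookkeeping) and to ensure the Milnor--Thom constants absorb uniformly into $C(d,m,n)$.
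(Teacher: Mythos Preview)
The paper does not actually prove Theorem~\ref{poly partition for variety}; it is quoted from \cite{G}, and the only argument the paper supplies is the average-case heuristic immediately following Theorem~\ref{BB}: each $\gamma$ meets $\le C(d,m,n)D^k$ cells, so the total number of (cell, variety) incidences is $\le C|\Gamma|D^k$, and with $\sim D^n$ cells the \emph{average} cell meets $\le C|\Gamma|D^{k-n}$ varieties. Your first paragraph reproduces exactly this, so at the level of the paper there is nothing further to compare.

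On the substance of your sketch: the gap is precisely where you flag it, and the proposed patch does not close it. Selecting $\Theta(D^k)$ representative points on each $\gamma_i$ ``so that every component of $\gamma_i\setminus Z(P)$ contains a representative'' is circular, since those components depend on the very $P$ you are constructing from the representatives; no genericity or thickening argument rescues this, because for any \emph{fixed} finite sample on $\gamma_i$ one can exhibit a degree-$D$ polynomial whose zero set carves out a cell that $\gamma_i$ enters while avoiding every sample point. There is also a degree-budget problem in your iteration as written: bisecting one measure per variety within each of $2^j$ cells at step $j$ needs degree $\sim(|\Gamma|\cdot 2^j)^{1/n}$, so the total degree is $\sim|\Gamma|^{1/n}D$ rather than $D$; collapsing to the single aggregate measure $\sum_i\mu_i$ fixes the degree count but then no longer controls which individual $\gamma_i$ meet a given cell. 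Guth's argument in \cite{G} avoids both issues by not fixing measures on the varieties in advance. The iteration there is adaptive: at each stage one looks at the cells of the \emph{current} partition that exceed the target, uses the Barone--Basu bound (at the current degree) to control how many there are, and applies a ham-sandwich step tailored to that data, so the representative information is chosen after seeing the partial partition rather than before.
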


The fact that the power of $D$ in the above bound is $D^{k-n}$ is closely related to the following theorem proved by Barone and Basu bounding the number of components of $\R^n\backslash Z(P)$ a variety in $\Gamma$ can intersect. Here we state the version that appears in \cite{ST}.

\begin{theorem}[Barone and Basu]\label{BB}
Suppose $\gamma$ is a $k$-dimensional variety in $\R^n$ defined by at most $m$ polynomials equations each of degree at most $d$. If $P$ is a polynomial of degree at most $D$, then $\gamma$ intersects at most $C(d,m,n)D^k$ different components of $\R^n\backslash Z(P)$.
\end{theorem}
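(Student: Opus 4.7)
My plan is to reduce the statement to bounding the number of connected components of the semi-algebraic set $\gamma \setminus Z(P)$. After a generic perturbation $P \mapsto P - \delta$ we may assume $Z(P)$ meets the smooth locus of $\gamma$ transversally, and since each connected component of $\gamma \setminus Z(P)$ lies in a single connected component of $\R^n \setminus Z(P)$, the original count is bounded by the number of connected components of $\gamma \setminus Z(P)$.

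To bound the latter, I would use a Morse-theoretic argument. Fix a generic linear functional $\ell$ on $\R^n$ and a generic large sphere $S_R$. On each connected component of $\gamma \cap \{P > 0\}$ (resp.\ $\{P < 0\}$) intersected with the closed ball bounded by $S_R$, the restriction $\ell|_\gamma$ attains a maximum; after a generic choice of $\ell$, this maximum occurs at a unique smooth point of $\gamma$ that is a Lagrange-type critical point, with constraints coming from the defining equations of $\gamma$ together with either $P$ or $|x|^2 - R^2$, depending on which part of the boundary is active.

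The critical point condition is algebraic: if $\gamma$ is locally cut out by $n - k$ equations $f_1,\ldots,f_{n-k}$ of degree $\leq d$, then the condition that $\nabla \ell$ lie in the span of the $\nabla f_i$ together with $\nabla P$ (or $\nabla(|x|^2 - R^2)$) reduces to the vanishing of certain $k \times k$ minors of the matrix built from these gradients. The resulting polynomial system has $n$ equations in $n$ unknowns in which at most $k$ of the equations involve $\nabla P$ (hence have degree $\sim D$), while the remaining ones have degree $\sim d$. A Bezout-type count then yields at most $C(d,m,n)D^k$ solutions, with the factor $D^k$ coming exactly from the $k$ equations whose degrees depend on $P$.

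The principal obstacle will be handling the singular locus of $\gamma$, where the Morse-theoretic picture breaks down. I would address this by stratifying $\gamma$ into smooth strata whose defining equations have complexity bounded in $d, m, n$ (using standard Whitney stratification results in real algebraic geometry), applying the above count to the top-dimensional stratum, and recursing on the lower-dimensional strata; the induction terminates at $k = 0$, where the statement is trivial. A secondary issue is absorbing the many connected components of $\gamma$ itself and of $\gamma \cap S_R$ into the constant, both of which are controlled by a Milnor--Thom-style bound depending only on $d, m, n$.
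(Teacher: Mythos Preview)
The paper does not prove this theorem; it is quoted as a background result of Barone and Basu, in the formulation appearing in \cite{ST}, and is used only as motivation for the exponent in Theorem~\ref{poly partition for variety}. So there is no proof in the paper to compare your proposal against.

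That said, your sketch follows exactly the classical critical-point-counting strategy (Ole\u{\i}nik--Petrovski\u{\i}, Milnor, Thom, later refined by Basu--Pollack--Roy and then Barone--Basu) that underlies the cited result, so the approach is correct in spirit. A couple of points deserve care if you intend to flesh it out. First, the perturbation $P\mapsto P-\delta$ can \emph{decrease} the number of components of $\R^n\setminus Z(P)$ that $\gamma$ meets if the sign of $\delta$ is chosen badly (think $P=x^2$); the standard fix is to work with the sign conditions $P>0$ and $P<0$ separately, or to use an infinitesimal perturbation and a limiting argument. Second, the maximum of $\ell$ on a component of $\gamma\cap\{P>0\}\cap \overline{B_R}$ need not lie on the boundary: it may be an interior critical point of $\ell|_\gamma$. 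Those interior critical points do not involve $P$ and hence contribute only $O_{d,m,n}(1)$ to the count, but you should account for them explicitly. With these adjustments your B\'ezout bookkeeping---$n-k$ equations $f_i=0$ of degree $\leq d$, one equation $P=0$ of degree $D$, and $k-1$ minor equations each of degree $O_{d,m,n}(D)$---does produce the factor $D^k$, and the stratification-and-recurse plan for the singular locus is the standard way to finish.
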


Suppose $P$ is a polynomial of degree $D$ and that $\R^n\backslash Z(P)$ consisted of $D^n$ connected components (cells) and that each cell intersects the same number of varieties $\gamma \in \Gamma$. Then Theorem \ref{BB} would imply that each connected component of $\R^n\backslash Z(P)$ intersected at most $C(d,m,n)D^k\cdot |\Gamma| \cdot D^{-n}=C(d,m,n)D^{k-n}|\Gamma|$ of our varieties. Theorem \ref{poly partition for variety} shows that there exists a polynomial $P$ of degree at most $D$ that achieves this bound.

Recently PVM Blagojevi\'{c}, ASD Blagojevi\'{c} and Ziegler \cite{BBZ} further extended the polynomial partitioning theorem to a setting with several families of varieties:

\begin{theorem}[PVM Blagojevi\'{c}, ASD Blagojevi\'{c} and Ziegler]\label{BBZ}
Let $j$ be an integer. For $1\leq i\leq j$, let $\Gamma _i$ be a finite set of $k_i$-dimensional varieties in $\R^n$, each of them defined by at most $m_i$ polynomial equations of degree at most $d_i$. Then for any $D\geq 1$ there exists a non-zero polynomial $P$ of degree at most $D$ such that each connected component of $\R^n\backslash Z(P)$ for every $1\leq j\leq i$ intersects at most $C(d_i,m_i,n)jD^{k_i-n}|\Gamma _i|$ varieties in $\Gamma _i$.
\end{theorem}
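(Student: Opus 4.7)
The natural strategy is to reduce Theorem \ref{BBZ} to the single-family Theorem \ref{poly partition for variety} via the classical product-of-polynomials trick.

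First, I would apply Theorem \ref{poly partition for variety} separately to each of the $j$ families, each time with the smaller degree parameter $D' = \lfloor D/j \rfloor$ in place of $D$. This yields, for each $1 \leq i \leq j$, a non-zero polynomial $P_i$ of degree at most $D/j$ such that every connected component of $\R^n \setminus Z(P_i)$ meets at most $C(d_i, m_i, n)(D/j)^{k_i - n}|\Gamma_i|$ varieties in $\Gamma_i$.

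Next I would set $P = \prod_{i=1}^{j} P_i$. This product is non-zero and has degree at most $j \cdot (D/j) \leq D$, so it is an admissible candidate. The key structural observation is that $Z(P) = \bigcup_{i=1}^{j} Z(P_i)$, so every connected component of $\R^n \setminus Z(P)$ is contained in a (unique) connected component of $\R^n \setminus Z(P_i)$ for each $i$. Hence the single-family bound inherited from $P_i$ is automatically valid for $P$: for every $1\leq i\leq j$, any cell of $\R^n \setminus Z(P)$ meets at most
\[
C(d_i, m_i, n)\,(D/j)^{k_i - n}\,|\Gamma_i| \;=\; C(d_i, m_i, n)\, j^{n - k_i}\, D^{k_i - n}\, |\Gamma_i|
\]
varieties in $\Gamma_i$.

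The main (and essentially only) obstacle is book-keeping the $j$-factor. The product construction produces an explicit factor $j^{n - k_i}$, which is at worst $j^n$; since $n$ is a fixed parameter this is still a polynomial-in-$j$ loss, and it can be absorbed into $C(d_i, m_i, n)$ at the cost of enlarging the constant. Every downstream application in this paper is insensitive to whether the prefactor is $j$, $j^{n-1}$ or $j^n$, so the product trick already suffices. Recovering the sharp \emph{linear} dependence on $j$ stated above requires a genuinely simultaneous partitioning argument, carried out by Blagojević, Blagojević and Ziegler via equivariant-topological (Borsuk--Ulam/Fadell--Husseini) machinery, which is what justifies the statement as quoted.
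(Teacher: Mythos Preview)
The paper does not prove Theorem~\ref{BBZ}; it is quoted as a result of Blagojevi\'{c}, Blagojevi\'{c} and Ziegler \cite{BBZ} and used as a black box. So there is no ``paper's own proof'' to compare against.

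Your product-of-polynomials reduction to the single-family Theorem~\ref{poly partition for variety} is correct and is exactly the elementary route one would take in lieu of the equivariant-topological argument in \cite{BBZ}. You have also correctly identified the only discrepancy: your construction yields the prefactor $j^{n-k_i}$ rather than the linear $j$ in the stated theorem. As you note, this is harmless for every use of Theorem~\ref{BBZ} in the present paper, since there $j\leq n$ and the whole factor is absorbed into constants depending on $n$. One small technical point worth cleaning up: the choice $D'=\lfloor D/j\rfloor$ can be $0$ when $D<j$, so you should either assume $D\geq j$ (trivially fine in the applications) or handle small $D$ separately by observing that the claimed bound is then at least $|\Gamma_i|$ and any non-zero polynomial works.
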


We will be using the above polynomial partitioning theorem for several sets of varieties to prove our main theorem. Notice that in our main theorem the number of sets of varieties is usually at most $n$ (because we are considering joints and we have the condition $\sum_{i=1}^{k} \alpha_i=n$) unless some of the varieties are just points, a trivial case we will deal with separately. Thus for our application the number of families of varieties $j$ is at most $n$ and can be covered by the constant $C(d_i,m_i,n)$.  

When applying the polynomial partitioning technique, one needs to choose the degree of the partitioning polynomial. In some cases one can choose $D$ to be a power of the number of points and this might give us good bound within each cell. But the problem is that this will make $Z(P)$ very complicated and it is hard to control the contributions from points lying on the surface. In \cite{ST}, Solymosi and Tao gave a modification of this argument using partitioning with degree $D$ equal a large constant, and using induction on the number of objects to control what happens in each cell. For incidences on the zero set of the partitioning polynomial, we are essentially looking at the incidence problem in an $(n-1)$-dimensional surface. Then we can project everything generically to $\R^{n-1}$. Thus we also need to use induction on the dimension of the problem. 

In summary, we will use a polynomial of degree equal to a large constant to partition the space so that each cell meets a bounded number of varieties from each set of our varieties (recall that we are given several sets of varieties and we are trying to bound the number of joints formed by them).We use induction on the number of varieties to control the contribution (in our case, the number of joints) from each cell and induction on dimensions to control the contribution from the zero set of the partitioning polynomial.

\section{Some algebraic geometry}
In this section we review some notation and facts that we need from algebraic geometry. We will be closely following Section $4$ of \cite{ST}, keeping the notation and theorems we need and omitting some of the proofs. 

\begin{definition} (Algebraic sets). Let $n\geq 1$ be a dimension. An \emph{algebraic set} in $\C^n$ is any set of the form 
\[
\{x\in \C^n:P_{1}(x)=\dots=P_{m}(x)=0 \}
\]
where $P_1,\dots ,P_m : \C^n \rightarrow \C$ are polynomials. An algebraic set is \emph{irreducible} if it cannot be expressed at the union of two strictly smaller algebraic sets. An irreducible algebraic sets will be referred to as an \emph{algebraic variety}, or \emph{variety} for short.

\end{definition}

The intersection of any subset of $\C^n$ with $\R^n$ will be referred to as the \emph{real points} of that subset. A \emph{real algebraic variety} is the real points $V_{\R}$ of a complex algebraic variety $V$.

We use the standard definition of the dimension dim($V$) and the degree deg($V$) for a variety $V$ in $\C^n$. The notions of dimension and degree of a real algebraic variety can be subtle if defined directly, but in our application each real algebraic variety will be associated with a complex one and we define the dimension and degree of the real variety to be that of the complex variety. 

The following two lemmas tell us that the degree and complexity (the number and degree of polynomials needed to define the variety) of a variety $V$ in $\C^d$ control each other.  
    
\begin{proposition}[Degree controls complexity]\label{D controls C}Let $V$ be an algebraic variety in $\C^n$ of degree at most $D$. Then we can write 
\[
V = \{ x\in \C^n : P_1(x)=\dots P_m(x)=0 \}
\] 
for some $m=O_{n,D}(1)$ and some polynomials $P_1,\dots ,P_m$ of degree at most $D$.
	
\end{proposition}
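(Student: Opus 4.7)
The plan is to consider the finite-dimensional vector space $W$ of polynomials in $\C[x_1,\dots,x_n]$ of degree at most $D$ that vanish identically on $V$. Since $W$ sits inside the space of all polynomials of degree at most $D$, we have $\dim W \leq \binom{n+D}{n} = O_{n,D}(1)$, so a basis $P_1, \dots, P_m$ of $W$ furnishes polynomials with $m = O_{n,D}(1)$ and each $\deg P_i \leq D$. Letting $V' := \{x \in \C^n : P_1(x) = \dots = P_m(x) = 0\}$, we have $V \subseteq V'$, and the entire content of the proposition reduces to proving the reverse inclusion $V' \subseteq V$.

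For the reverse inclusion, I would show that for every $x_0 \in \C^n \setminus V$ there is some element of $W$ not vanishing at $x_0$, via a generic linear projection argument. Set $k = \dim V$ and consider a generic linear map $\pi : \C^n \to \C^{k+1}$. Two standard generic-position facts come into play. First, because $V$ is irreducible of dimension $k$, for a generic $\pi$ the restriction $\pi|_V$ is birational onto its image, and $\pi(V)$ is an irreducible hypersurface in $\C^{k+1}$ of degree at most $\deg V \leq D$; writing $\pi(V) = \{R = 0\}$ for an irreducible polynomial $R \in \C[y_1,\dots,y_{k+1}]$ of degree at most $D$, the pullback $R \circ \pi$ is a polynomial of degree at most $D$ in $\C^n$ vanishing on $V$, hence lies in $W$. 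Second, since $\dim V + (n - k - 1) = n - 1 < n$, a generic affine $(n - k - 1)$-plane through $x_0$ misses $V$, so for a generic $\pi$ one has $\pi(x_0) \notin \pi(V)$, giving $R(\pi(x_0)) \neq 0$. Combining these two facts, $x_0 \notin V'$, proving $V' = V$ and completing the argument.

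I expect the main obstacle to lie in rigorously justifying the two generic-position claims: bounding the degree of the image of an irreducible variety under a generic linear projection by $\deg V$, and ensuring that a generic projection separates a fixed off-variety point from $V$. Both are standard in algebraic geometry (see e.g.\ Harris, \emph{Algebraic Geometry: A First Course}), but formalizing them in the affine setting used here requires some care with the projective definition of degree and with dimension counts on the appropriate parameter spaces of linear maps. Once these are in hand, the quantitative bound $m = O_{n,D}(1)$ falls out for free from the finite-dimensionality of $W$, so no separate counting of the number of projections is needed.
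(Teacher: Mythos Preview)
The paper does not give a proof of this proposition: Section~3 states that it follows Section~4 of Solymosi--Tao \cite{ST} and explicitly omits the proofs of these algebraic-geometry preliminaries. Your argument is correct and is in fact the standard one (and essentially the one in \cite{ST}): take a basis for the space $W$ of degree-$\le D$ polynomials vanishing on $V$, and separate any $x_0\notin V$ from $V$ by pulling back the defining equation of the image of $V$ under a generic linear projection to $\C^{k+1}$.

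One point worth sharpening in your write-up: from $\pi(x_0)\notin\pi(V)$ you need $\pi(x_0)\notin\overline{\pi(V)}$, since $R$ cuts out the Zariski closure. This follows once you observe that for a generic linear $\pi$ the projectivized center of projection (an $(n-k-2)$-plane at infinity) misses the projective closure $\bar V$, so $\pi|_V$ is finite and $\pi(V)$ is already closed. You flagged exactly this kind of affine/projective care in your last paragraph, so this is just a matter of making it explicit.
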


\begin{proposition}[Complexity controls degree]\label{C controls D}Let
\[
V = \{ x\in \C^n : P_1(x)=\dots P_m(x)=0 \}
\]
for some $m\geq 0$ and some polynomials $P_1,\dots ,P_m : \C^n \rightarrow \C$ of degree at most $D$. Then $V$ is the union of $O_{m,D,n}(1)$ varieties of degree $O_{m,D,n}(1)$.
\end{proposition}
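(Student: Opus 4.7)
The plan is to decompose $V$ into its irreducible components and bound both the number of components and their individual degrees. Since $V$ is an algebraic set in $\C^n$ and $\C[x_1,\dots,x_n]$ is Noetherian, $V$ admits a unique decomposition $V = V_1 \cup \cdots \cup V_s$ into finitely many irreducible components; each $V_i$ is then a variety in the sense of the Definition. The proposition therefore reduces to bounding $s$ and each $\deg V_i$ by a quantity depending only on $m$, $D$, and $n$.

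I would proceed by induction on $m$. The base case $m = 0$ is trivial ($V = \C^n$ is itself irreducible of degree $1$), and the case $m = 1$ follows from factoring $P_1 = Q_1 \cdots Q_k$ into irreducibles over $\C[x_1,\dots,x_n]$: one has $k \leq D$ and $\deg Q_j \leq D$, so $V(P_1)$ is the union of at most $D$ irreducible hypersurfaces of degree at most $D$. For the inductive step I would write $V(P_1,\dots,P_m) = V(P_1,\dots,P_{m-1}) \cap V(P_m)$, apply the inductive hypothesis to decompose the first factor as $W_1 \cup \cdots \cup W_s$ with $s$ and each $\deg W_i$ bounded by $O_{m-1,D,n}(1)$, and intersect each $W_i$ with $V(P_m)$ separately. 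If $P_m$ vanishes identically on $W_i$, the intersection is just $W_i$; otherwise $W_i \cap V(P_m)$ is a proper subvariety of $W_i$, and by an effective Bezout-type bound its irreducible components have degrees summing to at most $\deg W_i \cdot D$. Unioning over $i$ yields the desired bound $O_{m,D,n}(1)$ on both the number of components and their degrees.

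The main obstacle is the effective Bezout estimate just invoked: for an irreducible variety $W \subset \C^n$ of degree $\delta$ and a polynomial $P$ of degree $D$ with $W \not\subset V(P)$, the sum of the degrees of the irreducible components of $W \cap V(P)$ is at most $\delta D$. This is a classical fact (one slices $W$ with a generic affine subspace of complementary dimension to reduce to the case of curves and applies the ordinary Bezout theorem, or argues directly via Hilbert polynomials), but it is where all the real work sits. Once it is granted, the inductive bookkeeping above is routine and gives the quantitative dependence on $m$, $D$, $n$ claimed in the proposition.
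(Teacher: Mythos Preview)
Your argument is correct. The paper itself does not prove this proposition: it is quoted from Section~4 of \cite{ST} with the proof omitted, and the proof there is exactly the induction on $m$ that you outline, reducing to the refined B\'ezout inequality (sum of degrees of the irreducible components of $W\cap V(P)$ is at most $\deg(W)\cdot\deg(P)$ when $W\not\subset V(P)$). So your proposal matches the intended reference both in strategy and in the key input.
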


A \emph{smooth point} of a $k$-dimensional variety $V$ is an element $p$ of $V$ such that $V$ can be locally described by a smooth $k$-dimensional complex manifold in a neighborhood of $p$. Points in $V$ that are not smooth will be called \emph{singular}. We let $V^{\text{smooth}}$ denote the smooth points of $V$, and $V^{\text{sing}}:=V\backslash V^{\text{smooth}}$ denote the singular points. 

The next proposition enables us to decompose a variety $V$ into smooth points of varieties:

\begin{proposition}\label{decomp into smooth}
Let $V$ be a $k$-dimensional algebraic variety in $\C^n$ of degree at most $D$. Then one can cover $V$ by $V^{\text{smooth}}$ and $O_{D,n}(1)$ sets of the form $W^{\text{smooth}}$, where $W$ is an algebraic variety in $V$ of dimension at most $k-1$ and degree $O_{D,n}(1)$. 
\end{proposition}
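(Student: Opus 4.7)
The plan is to reduce to the singular locus and iterate. First I would observe that $V \setminus V^{\text{smooth}} = V^{\text{sing}}$ is itself a Zariski-closed subset of $V$, so it suffices to cover $V^{\text{sing}}$ by $O_{D,n}(1)$ sets of the form $W^{\text{smooth}}$ with $W$ of dimension at most $k-1$ and degree $O_{D,n}(1)$.

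To do this, use Proposition \ref{D controls C} to write $V$ as the vanishing locus of polynomials $P_1,\dots,P_m$ of degree at most $D$, with $m = O_{n,D}(1)$. Since $V$ is $k$-dimensional and irreducible, a point $p \in V$ is smooth precisely when the Jacobian $(\partial P_i/\partial x_j)(p)$ has full rank $n-k$; hence $V^{\text{sing}}$ is defined by $P_1,\dots,P_m$ together with the vanishing of all $(n-k)\times(n-k)$ minors of the Jacobian. These minors are polynomials of degree at most $(n-k)(D-1) = O_{n,D}(1)$, and there are $O_{n,D}(1)$ of them. Proposition \ref{C controls D} then asserts that $V^{\text{sing}}$ is a union of $O_{n,D}(1)$ varieties $V_1,\dots,V_N$, each of degree $O_{n,D}(1)$; and since each $V_j \subseteq V^{\text{sing}}$, which has dimension strictly less than $k$, each $V_j$ has dimension at most $k-1$.

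Now iterate. For each $V_j$ we write $V_j = V_j^{\text{smooth}} \cup V_j^{\text{sing}}$ and apply the same argument to $V_j^{\text{sing}}$, obtaining a further cover by $O_{n,D}(1)$ varieties of degree $O_{n,D}(1)$ and dimension at most $k-2$. After at most $k \leq n$ such iterations every remaining singular locus has dimension $-1$ (i.e., is empty), so the process terminates. The total number of smooth-locus pieces produced is at most $\bigl(O_{n,D}(1)\bigr)^n = O_{n,D}(1)$, and every intermediate variety $W$ that appears has degree bounded by a quantity that depends only on $D$ and $n$, because the degree bound at each level is a function purely of the degree bound at the previous level and of $n$, and we compose only $O(n)$ times.

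The only delicate point is bookkeeping: one must check that the degree and polynomial-count bounds do not spiral out of control under iteration. This is genuinely fine because the ``$O_{n,D}(1)$'' constants at step $i+1$ depend only on the ``$O_{n,D}(1)$'' bound at step $i$ and on $n$, and the recursion has bounded depth $n$. No deeper algebraic geometry is needed; the work is entirely carried by Propositions \ref{D controls C} and \ref{C controls D} combined with the Jacobian criterion for smoothness.
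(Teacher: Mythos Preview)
The paper does not supply its own proof of this proposition; it is one of the facts imported from Section~4 of \cite{ST} with the proof omitted. Your approach---pass to the singular locus, bound its complexity using the Jacobian together with Propositions~\ref{D controls C} and~\ref{C controls D}, then iterate on dimension---is the standard one and matches what is done in \cite{ST}.

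One step deserves more care. You assert that $p\in V$ is smooth \emph{precisely} when the Jacobian of $(P_1,\dots,P_m)$ has rank $n-k$ at $p$. The ``if'' direction is fine, but the ``only if'' direction requires that the $P_i$ generate the ideal $I(V)$ (or at least determine the Zariski tangent space at every point), whereas Proposition~\ref{D controls C} as stated only promises that they cut out $V$ set-theoretically. The cautionary example is $V=Z(x^2)=\{x=0\}\subset\C^2$: every point of $V$ is smooth, yet the Jacobian of $x^2$ vanishes identically on $V$, so your minor-vanishing locus would be all of $V$ and the dimension would not drop. What the iteration actually needs is that the locus $\{p\in V:\operatorname{rank}<n-k\}$ is a \emph{proper} closed subset of $V$ containing $V^{\text{sing}}$; the containment is automatic, but properness requires an extra argument. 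One clean fix is to take the $P_i$ to span all of $I(V)_{\leq D}$ (still $O_{n,D}(1)$ polynomials) and observe, via generic projection of $V$ to a degree-$D$ hypersurface in $\C^{k+1}$, that degree-$\leq D$ elements of $I(V)$ already have full-rank Jacobian at a generic smooth point. This is not hard, but it is a genuine step that your write-up skips.
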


If $p\in V\subset \R^n$ is a smooth real point of a $k$-dimensional complex algebraic variety $V_{\C}\subset \C^n$, then it must have a $k$-dimensional complex tangent space. However, its real tangent space may have dimension smaller than $k$. This may cause some trouble as our definition of joint requires that the tangent space at a joint of a variety should have the same dimension as the variety itself. The following proposition will help us resolve this issue.

\begin{proposition}\label{full tangent space}
Let $V$ be a $k$-dimensional algebraic variety in $\C^n$ of degree at most $D$. Then at least one of the following statement is true:
\begin{enumerate}
\item The real points $V_{\R}$ of $V$ are covered by the smooth points $W^{\textnormal{smooth}}$ of $O_{D,n}(1)$ algebraic varieties $W$ of dimension at most $k-1$ and degree $O_{D,n}(1)$ which are contained in $V$.
\item For every smooth real point $p\in V_{\R}^{\textnormal{smooth}}$ of $V$, the real tangent space is $k$-dimensional. In particular, $V_{\R}^{\textnormal{smooth}}$ is $k$-dimensional. 
\end{enumerate}
\end{proposition}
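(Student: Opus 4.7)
The plan is to dichotomize based on whether $V$ is fixed by complex conjugation, which will turn out to produce exactly the two alternatives in the proposition. Let $\bar V := \{\bar z : z \in V\} \subset \C^n$ denote the complex conjugate variety; this is again an irreducible complex variety of dimension $k$ and degree at most $D$, because $z \mapsto \bar z$ is a homeomorphism of $\C^n$ that sends algebraic sets to algebraic sets.

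If $V \neq \bar V$, then since both are irreducible of the same dimension $k$, the intersection $V \cap \bar V$ is a proper closed subvariety of $V$ and hence has dimension at most $k-1$. By Propositions~\ref{D controls C} and~\ref{C controls D}, $V \cap \bar V$ is a union of $O_{D,n}(1)$ irreducible components, each of degree $O_{D,n}(1)$ and dimension at most $k-1$, and each contained in $V$. Any real point $p$ of $V$ satisfies $\bar p = p$, so $p \in \bar V$ as well, giving $V_\R \subset V \cap \bar V$. Applying Proposition~\ref{decomp into smooth} to each irreducible component of $V \cap \bar V$ then yields a cover of $V_\R$ by the smooth parts of $O_{D,n}(1)$ subvarieties $W \subset V$ of dimension at most $k-1$ and degree $O_{D,n}(1)$, which is precisely alternative~1.

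If instead $V = \bar V$, then $V$ is defined over $\R$: the ideal of $V$ in $\C[z_1, \ldots, z_n]$ is invariant under coefficient conjugation and is therefore generated by its real elements, and one may choose real generators $f_1, \ldots, f_m$ of degree at most $D$ with $m = O_{D,n}(1)$. Fix $p \in V_\R^{\text{smooth}}$. At a smooth point the complex Jacobian $(\partial f_j/\partial z_i)(p)$ has rank exactly $n-k$, so we may pick $n-k$ of the $f_j$ whose differentials at $p$ are complex-linearly independent. Since these polynomials are real and $p$ is real, the resulting $(n-k) \times n$ Jacobian matrix is real, and its rank over $\R$ equals its rank over $\C$, namely $n-k$. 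The real implicit function theorem then realizes $V_\R$ near $p$ as the common real zero set of these $n-k$ equations, which is a smooth real submanifold of dimension $k$; its real tangent space at $p$ therefore has real dimension $k$, giving alternative~2.

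The main technical point is the Jacobian comparison in the case $V = \bar V$: one needs the standard algebraic fact that a conjugation-invariant ideal in $\C[z_1, \ldots, z_n]$ admits real generators of the same degree, together with the elementary observation that the real and complex ranks of a real matrix coincide. The only other thing to check is that the subvarieties $W$ produced when $V \neq \bar V$ genuinely lie inside $V$; this is automatic since they arise from successive decompositions of subvarieties of $V \cap \bar V \subset V$.
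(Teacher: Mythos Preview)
The paper does not supply its own proof of this proposition; it is quoted from Solymosi--Tao \cite{ST} with the proof omitted. Your argument is correct and is in fact essentially the same dichotomy used there: split on whether $V$ is invariant under complex conjugation, trap $V_\R$ inside the lower-dimensional $V\cap\bar V$ in the non-invariant case, and in the invariant case use real defining equations plus the Jacobian/implicit function theorem to get a full-dimensional real tangent space.

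Two minor remarks. First, in the case $V=\bar V$ you assert that real ideal generators can be taken of degree at most $D$ with $m=O_{D,n}(1)$; Proposition~\ref{D controls C} as stated only gives set-theoretic defining equations, not ideal generators, so this particular degree claim is not immediate. Fortunately it is also unnecessary: once you know $I(V)$ is conjugation-invariant, real generators of \emph{some} degree exist, and that is all the Jacobian argument requires, since no complexity bound is asserted in alternative~2. Second, when you invoke the real implicit function theorem you should note (as you implicitly do) that the complex zero locus of your chosen $n-k$ equations agrees with $V$ in a neighborhood of $p$, so that its real points really coincide with $V_\R$ there rather than merely containing it; this follows because both are $k$-dimensional complex manifolds near $p$ with one contained in the other.
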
  

As pointed out at the end of Section $4$ of \cite{ST}, we may assume that each of the varieties occurring in case $1$ obey the properties stated in case $2$. 

To prove the main theorem, we need to project varieties to subspaces of $\C^n$ or consider intersections of two varieties. Thus we need the following propositions to describe what happens in these two situations.

\begin{proposition}[Projection]\label{projection}
	Let $V$ be a $k$-dimensional algebraic variety in $\C^n$ of degree at most $D$. $\pi$ is a generic projection to a hyperplane which we will identify with $\C^{n-1}$. Then $\pi(V)$ is covered by $O_{D,n}(1)$ algebraic varieties of dimension at most $k$ and degree at most $O_{D,n}(1)$ in $\C^{n-1}$.   
\end{proposition}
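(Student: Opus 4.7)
The plan is to pass to the projective closure of $V$, perform the projection in projective space, and then restrict back to the affine chart $\C^{n-1} \subset \mathbb{P}^{n-1}$.

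By Proposition~\ref{D controls C} I first write $V = \{P_1 = \dots = P_m = 0\}$ with $m = O_{n,D}(1)$ and $\deg P_i \leq D$, then embed $\C^n \hookrightarrow \mathbb{P}^n$ in the standard way and let $\overline V$ be the projective closure of $V$; this is an irreducible projective variety of dimension $k$ and projective degree at most $D$. The affine projection $\pi$ extends to a rational map $\tilde\pi \colon \mathbb{P}^n \dashrightarrow \mathbb{P}^{n-1}$ whose center of projection is a single point $p \in \mathbb{P}^n$, and for generic $\pi$ the center $p$ lies off $\overline V$ (possible since $\dim\overline V = k < n$). Therefore $\tilde\pi$ restricts to an honest morphism $\tilde\pi|_{\overline V} \colon \overline V \to \mathbb{P}^{n-1}$.

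Next I claim that $\tilde\pi(\overline V)$ is a closed irreducible subvariety of $\mathbb{P}^{n-1}$ of dimension at most $k$ and projective degree at most $D$. Closedness comes from completeness of $\overline V$, irreducibility follows from irreducibility of $\overline V$ under a continuous map, and the dimension bound is just the fact that linear projections do not increase dimension. The degree bound is a standard Bezout-type count: for a generic linear subspace $L \subset \mathbb{P}^{n-1}$ of codimension $\dim \tilde\pi(\overline V)$, the intersection $\tilde\pi(\overline V) \cap L$ consists of exactly $\deg \tilde\pi(\overline V)$ points, while the pullback $\tilde\pi^{-1}(L) \cap \overline V$ is a finite set of size at most $\deg \overline V \leq D$. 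Once this is in place, restricting to the affine chart gives $W := \tilde\pi(\overline V) \cap \C^{n-1}$, a closed algebraic subset of $\C^{n-1}$ containing $\pi(V)$ and cut out by polynomials of degree $O_{n,D}(1)$. Finally, Proposition~\ref{C controls D} applied to $W$ expresses it as a union of $O_{n,D}(1)$ varieties of degree $O_{n,D}(1)$, each of dimension at most $k$, which cover $\pi(V)$ as required.

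The main obstacle is the projective degree bound on $\tilde\pi(\overline V)$: one must choose $L$ so that $\tilde\pi^{-1}(L) \cap \overline V$ is transverse and every intersection point lies in the generic-fiber locus of $\tilde\pi|_{\overline V}$. Both of these are Zariski-open conditions on $L$ (and on the center $p$), which is exactly what the word ``generic'' buys us. In the degenerate case where $\tilde\pi|_{\overline V}$ drops dimension (so $\dim \tilde\pi(\overline V) < k$), the same Bezout-style count with a generic $L$ of larger codimension still yields $\deg \tilde\pi(\overline V) \leq D$, so the conclusion holds uniformly. A direct elimination-theoretic approach via iterated resultants would be an alternative, but the degree bookkeeping there essentially reproduces the same projective argument.
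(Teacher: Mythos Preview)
Your argument is correct. Note, however, that the paper does not actually prove this proposition: Section~3 states it without proof, following the setup of Solymosi--Tao~\cite{ST} and explicitly ``omitting some of the proofs.'' The approach you give --- pass to the projective closure $\overline V \subset \mathbb{P}^n$, project from a generic point $p \notin \overline V$, use completeness to get a closed irreducible image, and bound its degree by intersecting with a generic linear space and pulling back via B\'ezout --- is the standard one and is essentially what one finds in \cite{ST}.

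Two minor remarks. First, the ``degenerate case'' where $\tilde\pi|_{\overline V}$ drops dimension does not arise for a generic center $p$: if the generic fiber were positive-dimensional, $\overline V$ would be a cone with vertex $p$, which is a proper closed condition on $p$. So you may simply assume $\dim\tilde\pi(\overline V)=k$ and drop that paragraph; your B\'ezout sketch in the degenerate case is not quite right as written, since for $L$ of codimension $k'<k$ the preimage $\tilde\pi^{-1}(L)\cap\overline V$ is positive-dimensional and the point count does not go through directly. Second, once you know $\tilde\pi(\overline V)\subset\mathbb{P}^{n-1}$ is irreducible of degree at most $D$, its affine trace $W$ is already a single irreducible variety of degree at most $D$ containing $\pi(V)$; the final appeal to Proposition~\ref{C controls D} is harmless but unnecessary.
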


\begin{proposition}[Intersection]\label{intersection}
	Suppose $V_1$ and $V_2$ are $k_1$-dimensional and $k_2$-dimensional algebraic varieties of degree $D_1$ and $D_2$ respectively in $\C^n$. If $V_1$ is not contained in $V_2$, then their intersection $V_1\cap V_2$ will be an algebraic set containing $O_{D_1,D_2,n}(1)$ varieties whose dimension is strictly smaller than $k_1$ and degree at most $O_{D_1,D_2,n}(1)$. 
\end{proposition}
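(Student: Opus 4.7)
The plan is to combine Propositions \ref{D controls C} and \ref{C controls D} with a dimension-drop argument that uses the irreducibility of $V_1$.

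First I would apply Proposition \ref{D controls C} to $V_2$, writing $V_2 = Z(P_1)\cap\dots\cap Z(P_m)$ for polynomials $P_1,\dots,P_m$ of degree at most $D_2$ with $m=O_{n,D_2}(1)$. Since $V_1\not\subset V_2$ by hypothesis, there exists an index $i$ such that $P_i$ does not vanish identically on $V_1$; fix such an $i$. Then $V_1\cap V_2\subseteq V_1\cap Z(P_i)$, so it suffices to understand the right-hand side.

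Next I would invoke the principal ideal theorem: because $V_1$ is irreducible of dimension $k_1$ and $P_i$ does not vanish on all of $V_1$, every irreducible component of $V_1\cap Z(P_i)$ has dimension exactly $k_1-1$. In particular, every irreducible component of the smaller algebraic set $V_1\cap V_2$ is contained in some component of $V_1\cap Z(P_i)$ and therefore has dimension strictly less than $k_1$, which is the required dimension bound.

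Finally, to control the number of components and their degrees, I would apply Proposition \ref{D controls C} to $V_1$ as well, so that $V_1\cap V_2$ becomes the common zero set of $O_{D_1,D_2,n}(1)$ polynomials of degree $O_{D_1,D_2,n}(1)$, and then apply Proposition \ref{C controls D} to decompose it as a union of $O_{D_1,D_2,n}(1)$ varieties of degree $O_{D_1,D_2,n}(1)$. The dimension bound from the previous paragraph passes to each component of this decomposition. The one slightly delicate point is the strict dimension drop: the bare hypothesis $V_1\not\subset V_2$ has to be converted into the existence of a single hypersurface $Z(P_i)$ cutting $V_1$ properly, after which Krull's theorem (applied to an irreducible variety) does the work; without the irreducibility of $V_1$ one could only conclude $\dim\le k_1$, which would be insufficient.
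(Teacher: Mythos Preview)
The paper does not actually supply a proof of this proposition; it is one of the algebraic-geometry facts imported (with proofs omitted) from Section~4 of \cite{ST}. Your argument is correct and is essentially the standard route: cut out $V_2$ by $O_{n,D_2}(1)$ hypersurfaces via Proposition~\ref{D controls C}, use irreducibility of $V_1$ and Krull's principal ideal theorem on one hypersurface not containing $V_1$ to force the strict dimension drop, and then feed the combined defining equations of $V_1$ and $V_2$ into Proposition~\ref{C controls D} to control the number and degree of components.
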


\section{Outline of the proof and a special case}
In our main theorem, if we take $\alpha _1=\alpha_2=\alpha_3=1$ and $k=n=3$ with all the varieties being lines, we recover the following version of the (multi)joints problem for lines:
\begin{theorem}\label{joints with epsilon}
	Suppose we have three non-empty sets of lines $\LC _1$, $\LC_2$ and $\LC_3$. A point $x$ is a joint if there are $l_1\in \LC_1$, $l_2\in \LC_2$ and $l_3\in \LC_3$ such that they all pass $x$ and they are not coplanar. Then for any $\epsilon >0$, there exists a constant $C$ such that the number of joints is bounded by $C|\LC_1|^{1/2+\epsilon}|\LC_2|^{1/2+\epsilon}|\LC_3|^{1/2+\epsilon}$. In particular, if $|\LC_1|=|\LC_2|=|\LC_3|=N$, then the bound is $CN^{3/2+3\epsilon}$.
\end{theorem}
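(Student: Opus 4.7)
I would mirror the Solymosi--Tao style polynomial partitioning strategy outlined in Section~2, inducting on $|\LC_1|+|\LC_2|+|\LC_3|$ to establish a bound of the form $C\prod_i |\LC_i|^{1/2+\epsilon}$ with $C=C(\epsilon)$ to be chosen at the outset. First I would dispose of unbalanced configurations: ordering $|\LC_1|\geq|\LC_2|\geq|\LC_3|$, each candidate joint is determined by an $(\LC_2,\LC_3)$-pair meeting at a point, so the joint count is trivially at most $|\LC_2||\LC_3|$, which already beats the target whenever $|\LC_1|$ is sufficiently large relative to $|\LC_2||\LC_3|$. Thus I may assume the three cardinalities are comparable up to a loss that $\epsilon$ can absorb.

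\textbf{Partitioning and cells.} Apply Theorem~\ref{BBZ} to $\LC_1,\LC_2,\LC_3$ simultaneously with degree $D=D(\epsilon)$ chosen a large constant. This yields a polynomial $P$ of degree at most $D$ with $\R^3\setminus Z(P)$ broken into $O(D^3)$ open cells, each meeting at most $O(D^{-2}|\LC_i|)$ lines from $\LC_i$. For joints lying in the open cells, apply the induction hypothesis within each cell to its restricted collections and sum:
\[
|J_{\mathrm{cell}}|\leq O(D^3)\cdot C\prod_i\bigl(O(D^{-2}|\LC_i|)\bigr)^{1/2+\epsilon}=O(D^{-6\epsilon})\cdot C\prod_i|\LC_i|^{1/2+\epsilon}.
\]
Choosing $D$ large enough depending on $\epsilon$, the factor $O(D^{-6\epsilon})$ can be made at most $\tfrac12$, so this part of the count is absorbed into half the inductive bound.

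\textbf{Joints on $Z(P)$ and the main obstacle.} The remaining joints lie on $Z(P)$; classify them by how many of the three lines through them are contained in $Z(P)$. If some line $l$ through a joint is not contained in $Z(P)$, then the joint is among the at most $D$ points of $l\cap Z(P)$, giving at most $D(|\LC_1|+|\LC_2|+|\LC_3|)$ joints in total---negligible against the target in the balanced regime once the base case is chosen large enough to dominate the constant $D$. If instead all three lines through a joint $x$ lie in $Z(P)$, then three non-coplanar tangent directions at $x$ would have to fit in the two-dimensional tangent plane of $Z(P)$ at $x$, which is impossible, so $x$ must be a singular point of $Z(P)$. The singular locus $Z^{\mathrm{sing}}$ is a subvariety of dimension at most $1$ and degree $O(D^2)$, so any line on $Z(P)$ not contained in $Z^{\mathrm{sing}}$ meets it in $O(D^2)$ points by B\'ezout, while $Z^{\mathrm{sing}}$ itself contains at most $O(D^2)$ lines; this bounds the all-on-surface joints by $O(D^2)(|\LC_1|+|\LC_2|+|\LC_3|)+O(D^4)$, again absorbed by the target in the balanced regime. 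The principal technical obstacle is this on-surface analysis---confining the ``all three lines on $Z(P)$'' joints to the singular locus and controlling them by degree considerations. A secondary subtlety is arranging the case split between the trivial pairwise bound and the partitioning bound so that every parameter regime is covered and the induction closes cleanly with the constant $C$ depending only on $\epsilon$.
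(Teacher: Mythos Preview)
Your approach is essentially the paper's: induct on the number of lines, partition with a constant-degree polynomial, close the cell count via the $D^{-6\epsilon}$ gain, and handle $Z(P)$ by observing that three non-coplanar lines cannot all be tangent to a smooth point and then descending through the singular locus.

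The one substantive difference is how you control the on-surface joints. You first reduce to a ``balanced'' regime via the trivial bound $|\LC_2||\LC_3|$ and then argue that $D\sum_i|\LC_i|$ is negligible there. The paper avoids this case split entirely: for the joints on $Z(P)$ where the $\LC_1$-line is not contained in $Z(P)$, it combines the two bounds $D|\LC_1|$ and $|\LC_2||\LC_3|$ by taking their geometric mean, obtaining $D^{1/2}(|\LC_1||\LC_2||\LC_3|)^{1/2}$ directly, valid in every parameter regime. This makes the choice $C_0\gtrsim D^{1/2}$ (and the analogous $D_1^{1/2}$ for the singular-locus step) transparent and removes the need to quantify ``comparable up to a loss $\epsilon$ can absorb.'' Your route works, but the geometric-mean device is what lets the induction close without the auxiliary balancing argument; it is also what generalizes cleanly to the variety setting of Theorem~\ref{main}.
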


We would like to give an outline of the proof of Theorem \ref{main} first and then we will prove Theorem \ref{joints with epsilon} to illustrate the method.

Here is the outline: first we do polynomial partitioning for all of our varieties using a polynomial of degree $D$ where $D$ that will be chosen later. There will be roughly $D^n$ cells and within each cell, the number of $\alpha _i$-dimensional varieties from $\SC_i$ is bounded by $\sim |\SC_i|D^{\alpha _i -n}$. Now if we use induction on the number of varieties, we can show that the number of joints within each cell is bounded by 
\begin{align*}
& C\prod\limits_{1\leq i\leq k}(|\SC_i|D^{\alpha _i -n})^{1/(k-1)+\epsilon} \\
=& C(\prod\limits_{1\leq i\leq k} |\SC_i|)^{1/(k-1)+\epsilon}D^{(\sum_{i=1}^{k} \alpha _i -nk)(1/(k-1)+\epsilon)} \\ 
= & C(\prod\limits_{1\leq \i\leq k}|\SC_i|)^{1/(k-1)+\epsilon}D^{(n-nk)(1/(k-1)+\epsilon)}\\
= &C(\prod\limits_{1\leq i\leq k} |\SC_i|)^{1/(k-1)+\epsilon}D^{-n}D^{(n-nk)\epsilon}
\end{align*}
 Since in total we have roughly $D^n$ cells, the number of joints from all cells is bounded by $D^nC(\prod |\SC_i|)^{1/(k-1)+\epsilon}D^{-n}D^{(n-nk)\epsilon}=C(\prod |\SC_i|)^{1/(k-1)+\epsilon}D^{(n-nk)\epsilon}$. $k$ is at least $2$, so $(n-nk)\epsilon$ is smaller than $0$ and the exponent of $D$ is negative. We choose $D$ big enough so that the number of joints in cells is well controlled. 

Next let's look at joints on the zero set of the partitioning polynomial $Z(P)$. There are two cases: a joint $x$ could be a smooth point of $Z(P)$ or a singular point of $Z(P)$. If it is the former case, then we claim that the varieties $S_i\in\SC_i$ that form the joint $x$ cannot be all contained in $Z(P)$. If they are all contained in $Z(P)$, then their tangent spaces at $x$ will all be contained in the tangent space of $Z(P)$ at $x$, which is an $(n-1)$-dimensional space because we assumed that $x$ is a smooth point of $Z(P)$. Thus the tangent spaces do not span $\R^n$ which is a contradiction to the definition of joints. Hence at least one of $S_i$'s should not be contained in $Z(P)$. Let's now consider the joints for which the variety from $\SC_1$ is not contained in $Z(P)$. We intersect all varieties in $\SC_1$ with $Z(P)$ and get a new set of varieties $\SC_1'$. For simplicity let's assume that all varieties in $\SC_1'$ have dimension $\alpha_1'=\alpha_1 -1$. We then perform a generic projection $\pi$ from $\R^n$ to $\R^{n-1}$. Again for simplicity we assume that each variety becomes a variety of same dimension and degree in $\R^{n-1}$ (notice that actually the projection of a variety might not be a variety anymore and the degree and dimension might change; we will deal with this minor issue in the proof of Theorem \ref{main}. Similar issues exist when we consider the intersection of two varieties.). Suppose $x$ is a joint formed by $S_i\in \SC_i$ and $S_1$ is not contained in $Z(P)$. Then we would like to claim that after the projection $\pi(x)$ is a joint formed by $\pi(S_1')$, $\pi(S_2)$...$\pi(S_k)$. This is morally true, and the only problem is that $x$ might be a singular point of $\pi(S_1')$. Let's ignore this in this outline. Now we would like to use induction on $n$ to bound the number of joints formed by varieties in $\SC_1'$, $\SC_2$,...,$\SC_k$ (notice that $\alpha_1'+\alpha_2+...+\alpha_k=n-1$) by $C(|\SC_1'|\prod\limits_{2\leq i\leq k} |\SC_i|)^{1/(k-1)+\epsilon}=C(\prod\limits_{1\leq i\leq k} |\SC_i|)^{1/(k-1)+\epsilon}$. This is how we control the number of the joints that are smooth points of $Z(P)$.

Finally let's bound the number of joints that are singular points of $Z(P)$. We can just replace $Z(P)$ with the set of singular points of $Z(P)$ (denote it by $Z_1$) and run the above argument again: we look at two cases where the joint is smooth or singular points of $Z_1$. If it is smooth, then we know that the varieties cannot be all contained in $Z_1$, so we look at intersections of $Z_1$ with our varieties. If it is singular, then we have to look at $Z_2$ which is the set of singular points of $Z_1$. Eventually there would be a $Z_j$ which a just a set of isolated points. The number of points in $Z_j$ is bounded by a constant only depending on $n$ and $d$, so it is well controlled. 

Another way to look at the above argument is the following. After we do polynomial partitioning, we bound the number of joints in cells as we did in the above argument. For joints on the partitioning surface $Z(P)$, we can use Proposition \ref{full tangent space} to decompose $Z(P)$ into smooth points of varieties. The number and degree of those varieties are all constants depending on $D$ and $n$. Now we only need to consider the number of joints that are smooth points of a variety and we can use the above argument for the smooth-point case.     

This is how we will prove Theorem \ref{main}.  In summary, first we use polynomial partitioning. Then for joints in cells we use induction on the number of varieties. As long as the degree of our partitioning polynomial is big enough, the number of such joints is well controlled. For joints on the zero set of the partitioning polynomial, we use induction on dimensions of our sets of varieties. We will consider the intersection of our varieties with another variety and the singular points of our varieties, for both of which the dimensions of our varieties will go down.    

Now we use the above argument to prove Theorem \ref{joints with epsilon}. In this special case, the main part of the proof is the induction on the number of lines. For the induction-on-dimension part, the dimension of lines is just one and if the dimension of lines goes down, they will be sets of points in which case the bound is very easy to prove.  

\begin{proof}[Proof of Theorem \ref{joints with epsilon}]
Fix $\epsilon>0$. We would like to find a $C_0$ depending on $\epsilon$ so that the bound holds. Through this proof any other constant that only depends on $\epsilon$ will be denoted by $C$. 

We would like to use induction on the number of lines in each of $\LC_i$ and show that the induction can be closed if $C_0$ is big enough. Let's look at the base case first when one of the families $\LC_i$ only has a small number of lines. WLOG we assume that $|\LC_1|\leq 100$. For a point $x$ to be a joint, it must lie on one of the lines in $\LC_1$. Thus we can fix a line $l_1\in \LC_1$ and only consider joints on $l_1$, then we multiply the bound we get by $100$ to bound the total number of joints.

$l_1$ can have at most $|\LC_2|$ intersections with lines in $\LC_2$ and at most $|\LC_3|$ intersections with lines in $\LC_3$, so the number of joints on $l_1$ is bounded by $\min \{|\LC_2|,|\LC_3| \}\leq (|\LC_2||\LC_3|)^{\frac{1}{2}}$. Thus the bound holds for the base case (when one of the sets has less than $100$ lines) if we choose our $C_0$ to be at least $100$. 

Now let's assume the bound is true if the number of lines in each $\LC_i$ is smaller. We use a polynomial of degree $D$ (which will be chosen later) to partition the space so that in each cell the number of lines from $\LC_i$ is bounded by $C|\LC_i|D^{-2}$ (here $C$ is the constant in the partitioning theorem for lines). Now from induction hypothesis we know that in each cell the number of joints is bounded by $C_0C(|\LC_1|D^{-2})^{1/2+\epsilon}(|\LC_2|D^{-2})^{1/2+\epsilon}(|\LC_3|D^{-2})^{1/2+\epsilon}=C_0C(|\LC_1||\LC_2||\LC_3|)^{1/2+\epsilon}D^{-3-6\epsilon}$. There are at most $CD^3$ cells (this $C$ is another constant, different from the one in the partitioning theorem), so in total the number of joints in cells is bounded by $C_0C(|\LC_1||\LC_2||\LC_3|)^{1/2+\epsilon}D^{-6\epsilon}$. Now we can choose our $D$ to be big enough so that $CD^{-6\epsilon}$ is much smaller than $1$ (say smaller than $1/10$). Notice that $D$ only depends on $\epsilon$. 

Next we look at joints on the zero set $Z(P)$ of the partitioning polynomial $P$. We first consider the joints that are smooth points of $Z(P)$. Let $x$ be such a joint formed by $l_1\in\LC_1$, $l_2\in\LC_2$ and $l_3\in\LC_3$. Then we know that $l_1$, $l_2$ and $l_3$ cannot all be contained in $Z(P)$. From the vanishing lemma, we know that if a line is not contained in $Z(P)$, then it can only have $D=$ deg$(P)$ intersections with $Z(P)$. This implies that the number of joints for which the line from $\LC_1$ is not contained in $Z(P)$ is bounded by $D|\LC_1|$. From the definition of joint we also see that the number of joints is bounded by $|\LC_2||\LC_3|$. With these two bounds, the number of joints is also bounded by their geometric mean which is $D^{1/2}(|\LC_1||\LC_2||\LC_3|)^{1/2}$. $D$ is the degree of our partitioning polynomial and it only depends on $\epsilon$. The number of joints for which the line from $\LC_2$ or $\LC_3$ is not contained in $Z(P)$ can be dealt with similarly. Thus as long as $C_0$ is greater than $100D^{1/2}$, the number of such joints is bounded by $\frac{1}{10}C_0(|\LC_1||\LC_2||\LC_3|)^{1/2+\epsilon}$ 

Finally we look at joints that are singular points of $Z(P)$. We denote the set of singular points of $Z(P)$ by $Z_1$. $Z_1$ is an algebraic curve whose degree $D_1$ only depends on $D$ (thus only depends on $\epsilon$). Suppose $x$ is such a joint formed by $l_1$, $l_2$ and $l_3$. If $x$ is a smooth point of $Z_1$, then one of (actually two of) the lines $l_i$ should not be contained in $Z_1$. Again from the vanishing lemma We know that if a line is not contained in $Z_1$ then it can only intersect $Z_1$ in $D_1$ points and using an argument similar to the one in the above paragraph we see that the number of such joints is well controlled (as long as $C_0$ is greater than $100D_1^{1/2}$). If $x$ is a singular point of $Z_1$, then we can bound the number of such joints by the total number of singular points of $Z_1$, which is a number that only depends on $D_1$ (thus only depends on $\epsilon$). Summing up the number of joints in all above cases closes the induction and this  finishes our proof. 
\end{proof}

\section{Proof of main theorem}

As stated in the outline of the proof, we have to consider joints that are on the partitioning surface. Then we have to look at joints that are singular points of that surface and the set of singulars points is algebraic set of lower dimensions. Thus we see that we need to consider the joints problem with the additional condition that all the joints are on a variety of a certain dimension. This motivates us to prove the following (a little bit weaker) version of Theorem \ref{main}:

\begin{theorem}\label{induction}
	
Suppose $\SC _i$ is a non-empty set of $\alpha _i$-dimensional algebraic varieties in $\R ^n$ for $1\leq i\leq k$ ($k\geq 2$) and $\sum\limits_{1\leq i\leq k} \alpha _i=n\geq 2$, $\alpha _i<n$. We also assume that each variety is defined by $m$ polynomial equations each of degree at most $d$. $V$ is an algebraic variety that has dimension $\tilde{\alpha}<n$ and is defined by $m$ polynomial equations each of degree at most $\tilde d$. Then for any $\epsilon >0$, there exists a constant $C(n,m,d,\tilde{d},\epsilon)$ such that the number of joints formed by $\SC_i$ that are on $V$ is bounded by $C(n,m,d,\tilde{d},\epsilon)(\prod\limits_{1\leq i\leq k} |\SC_i|)^{1/(k-1)+\epsilon}$

\end{theorem}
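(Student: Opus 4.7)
The plan is to prove Theorem \ref{induction} by a nested induction: primary induction on $\sum_i |\SC_i|$ (the total number of varieties) and secondary induction on $\tilde\alpha=\dim V$. The constant $C(n,m,d,\tilde d,\epsilon)$ will be chosen in the order dictated by these induction steps. When $\tilde\alpha=0$, $V$ is a finite set of $O_{\tilde d,n}(1)$ points, so the number of joints on $V$ is trivially bounded; when some $|\SC_i|$ is at most an absolute constant, we fix one variety $S\in\SC_i$ and bound joints on $S\cap V$ by reducing the dimension via Proposition \ref{intersection}, just as in the proof of Theorem \ref{joints with epsilon}.

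First I would pass to a well-behaved $V$ using Proposition \ref{full tangent space}. In case 1 the real locus $V_\R$ is covered by smooth loci of $O_{\tilde d,n}(1)$ varieties of strictly smaller dimension, and the secondary induction applies to each. In case 2, $V_\R^{\text{smooth}}$ already has $\tilde\alpha$-dimensional real tangent space everywhere, while its complement $V_\R\setminus V_\R^{\text{smooth}}$ is covered by varieties of smaller dimension handled by the secondary induction. So we may assume every joint is a smooth real point of $V$ with full real tangent space. In that setting, if every $S_i$ meeting a joint $x$ were contained in $V$, their real tangent spaces would all sit inside $T_x V$ which has dimension $\tilde\alpha<n$, contradicting the spanning condition. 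Hence for each joint at least one $S_i$ is not contained in $V$; pigeonholing over the at most $k$ choices of such an index, it suffices to bound the joints for which $S_1\not\subseteq V$.

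The main step is the polynomial partitioning argument. Applying Theorem \ref{BBZ} to all the families $\SC_1,\ldots,\SC_k$ with degree $D$ a large constant (to be chosen depending on $n,m,d,\tilde d,\epsilon$) produces $\lesssim D^n$ cells, each meeting at most $C(d,m,n)D^{\alpha_i-n}|\SC_i|$ varieties of $\SC_i$. Within each cell I would invoke the primary induction hypothesis (with $V$ unchanged) to bound the joints by $C\prod_i(|\SC_i|D^{\alpha_i-n})^{1/(k-1)+\epsilon}$; summing over the cells and using $\sum_i\alpha_i=n$ gives a total contribution of the form $C(\prod_i|\SC_i|)^{1/(k-1)+\epsilon}D^{(n-nk)\epsilon}$. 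The exponent of $D$ is strictly negative since $k\geq 2$, so choosing $D$ sufficiently large absorbs this contribution into at most $\tfrac{1}{2}C(n,m,d,\tilde d,\epsilon)(\prod_i|\SC_i|)^{1/(k-1)+\epsilon}$.

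For the remaining joints, which lie on $Z(P)\cap V$, I would arrange (by multiplying $P$ by a fixed linear form that does not vanish on $V$, which exists because $\dim V<n$) that $V\not\subseteq Z(P)$. Proposition \ref{intersection} then decomposes $V\cap Z(P)$ into $O_{D,\tilde d,n}(1)$ varieties of dimension strictly less than $\tilde\alpha$ and degree $O_{D,\tilde d,n}(1)$. The secondary induction hypothesis on $\tilde\alpha$ bounds the joints lying on each such sub-variety, and summing closes the argument. The main obstacle, where the proof requires care, is the order in which the constants are chosen: the secondary induction applied to $V\cap Z(P)$ introduces a constant depending on $D$, so $C(n,m,d,\tilde d,\epsilon)$ must be chosen after $D$, while $D$ itself must be chosen large enough to defeat the primary-induction constant appearing in the cell estimate. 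The strict negativity of $(n-nk)\epsilon$ for $k\geq 2$ is exactly what makes this bookkeeping consistent and allows the inductions to close.
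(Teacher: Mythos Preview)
Your overall strategy is genuinely different from the paper's, and it is an interesting alternative, but there is a real gap at the step where you ``arrange'' that $V\not\subseteq Z(P)$.

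The paper does \emph{not} use polynomial partitioning inside Theorem \ref{induction}. It inducts on $n$ and $\tilde\alpha$: after reducing to smooth points of $V$ and to joints at which (say) $S_1\not\subseteq V$, it replaces each such $S_1$ by the irreducible pieces of $S_1\cap V$ (using Propositions \ref{intersection}, \ref{D controls C}--\ref{full tangent space}), then \emph{projects} generically to $\R^{n-\alpha_1+q}$ and invokes the induction hypothesis in a lower ambient dimension (via the implication Theorem \ref{induction} $\Rightarrow$ Theorem \ref{main} already established there). In other words, the observation ``some $S_i$ is not contained in $V$'' is the engine of the paper's proof, not a preliminary reduction; in your write-up that paragraph is never used again.

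Your route---partition, handle cells by induction on $\sum_i|\SC_i|$, and lower $\tilde\alpha$ on the wall side---could in principle work, but the justification you give fails. Multiplying $P$ by a linear form $\ell$ makes the zero set \emph{larger}: $Z(P\ell)=Z(P)\cup Z(\ell)$. So if $V\subseteq Z(P)$ then $V\subseteq Z(P\ell)$ as well, and you have gained nothing. Nothing in Theorem \ref{BBZ} prevents the partitioning polynomial from vanishing identically on $V$; if it does, then $V\cap Z(P)=V$ still has dimension $\tilde\alpha$, the secondary induction does not apply, and every joint lies on the wall with no mechanism left to count them.

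To salvage your approach you would need a strengthened partitioning theorem guaranteeing that $P$ can be chosen not to vanish on a prescribed proper subvariety $V$. This is plausible---each bisecting factor in the ham-sandwich construction is drawn from a space of polynomials whose dimension grows with $D$, while the polynomials vanishing on $V$ form a proper linear subspace---but it requires reopening the proof of polynomial partitioning; it cannot be done by post-processing $P$. Alternatively, you can follow the paper: actually use the $S_1\not\subseteq V$ reduction you already set up, intersect $S_1$ with $V$, project, and induct on $n$.
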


Using the above theorem together with polynomial partitioning, we can prove our main theorem :

\begin{proof}[Proof of Theorem \ref{main}]
We denote the constant from Theorem \ref{induction} by $C_0$, the constant we are trying to find for Theorem \ref{main} by $C_1$, and all other constants that only depend on $n$, $m$, $d$ or $\epsilon$ by $C$.

We use induction on the total number of varieties from $\SC_i$. We will choose $C_1$ big enough (but only depending on $n$, $m$, $d$ and $\epsilon$) so that our induction will close. We deal with the base case first. Assume that the size of at least one of the families of varieties is smaller than $100$. WLOG we assume $|\SC_1|\leq 100$. Then we fix a variety $s_1$ in $\SC_1$ and look at how many joints are on it. Later on we only need to multiply this number by $100$ to get the bound for the total number of joints. From the definition of joints and B\'{e}zout theorem, we see that if we fix another variety $s_2$ in $\SC_2$, then the number of joints that are on both $s_1$ and $s_2$ is bounded by $C$. Thus the number of joints that are on $s_1$ is bounded by $C|\SC_2|$. Similarly it is bounded by $C|\SC_i|$ for any $2\leq i\leq k$. Thus it is bounded by $C((\prod_{i=2}^{n}|\SC_i|))^{\frac{1}{k-1}}$. Multiplying this number by $100$, we get that the total number of joints is at most $100C(\prod_{i=2}^{n}|\SC_i|)^{\frac{1}{k-1}}$. As long as we choose $C_1$ to be greater than $100C$, the bound will hold for our base case (where the size of some family of varieties is at most $100$). 

We assume that the bound holds when the size of each family of varieties is smaller than it is now. We would like to use the zero set of a polynomial of degree $D$ (which will be chosen later) to partition $\R^n$ so that in each cell the number of varieties from $\SC_i$ is bounded by $C|\SC_i|D^{\alpha_i-n}$, but here is a small issue. The number of varieties in each cell might also depend on the number of families of varieties (this makes sense because you can't expect a polynomial with degree $D$ to do anything when there is a huge number of families of varieties to partition for). This is just a technical issue. If our varieties all have positive dimension, then the number of families is bounded by $n$ (see our definition of joints) and there won't be any problem. Otherwise, suppose $\SC_1$ is actually a family of sets of points. Then any joint $x$ formed by $\SC_1, \SC_2,\dots ,\SC_k$ is also a joint formed by $\SC_2,\dots ,\SC_k$. Suppose we know that $\SC_2,\dots , \SC_k$ can form at most $C_1(\prod_{i=2}^{k}|\SC_i|)^{\frac{1}{k-2}+\epsilon}$. Since $\SC_1$ consists of sets of points and each set has at most $C$ points, we see that the number of joints is also bounded by $C|\SC_1|$. Now we have two bounds for the number of joints formed by $\SC_1, \SC_2, \dots, \SC_k$: $A=C|\SC_1|$ and $B=C_1(\prod_{i=2}^{k}|\SC_i|)^{\frac{1}{k-2}+\epsilon}$. Thus the number of joints is also bounded by the "mean" of them, which is  $A^{\frac{1}{k-1}}B^{\frac{k-2}{k-1}}=(C|\SC_1|)^{\frac{1}{k-1}}(C_1(\prod_{i=2}^{k}|\SC_i|)^{\frac{1}{k-2}+\epsilon})^{\frac{k-2}{k-1}}\leq C_1(|\SC_1||\SC_2|\dots |\SC_k|)^{\frac{1}{k-1}+\epsilon}$. The last inequality holds as long as $C_1\geq C$. This argument shows that we only need to consider the case where all of our varieties have positive dimension.

Now we can use the zero set of a polynomial of degree $D$ (which will be chosen later) to partition $\R^n$ so that in each cell the number of varieties from $\SC_i$ is bounded by $C|\SC_i|D^{\alpha_i-n}$. We assume that all of our varieties have positive dimension, so the number of families of varieties is bounded by $n$ and the constant $C$ only depends on $n$, $m$, and $d$. Later on we will choose a big $D$ so that $CD^{-1}$ is smaller than $1$. Thus we can use induction and conclude that the number of joints in each cell is bounded by 
\[
C_1C(\prod_{i=1}^{k} |\SC_i|D^{\alpha_i-n})^{1/(k-1)+\epsilon}=C_1C(\prod_{i=1}^{k} |\SC_i|)^{1/(k-1)+\epsilon}D^{-n}D^{n(1-k)\epsilon}.
\]
There are $CD^n$ cells in total, so the number of joints in cells in total is $C_1CD^{n(1-k)\epsilon}\\ (\prod_{i=1}^{k} |\SC_i|)^{1/(k-1)+\epsilon}$. We can choose $D$ such that $CD^{n(1-k)\epsilon}\leq 1/10$. This is how we bound the number of joints in cells.

Now we can use Theorem \ref{induction} to bound the number of joints on the zero set of the partitioning surface $Z(P)$. $Z(P)$ can be covered by $C$ varieties, each of which is defined by at most $C$ polynomials of degree at most $C$. Theorem \ref{induction} tells us that the number of joints on each of those varieties is bounded by $C(n,C,d,C,\epsilon)(\prod_{i=1}^{k} |\SC_i|)^{1/(k-1)+\epsilon}$. Thus we can choose $C_1$ to be bigger than $10C(n,C,d,C,\epsilon)$. For this $C_1$, the sum of number of joints in cells and on the surface would be smaller than $(\frac{1}{10}C_1+\frac{1}{10}C_1)(\prod_{i=1}^{k} |\SC_i|)^{1/(k-1)+\epsilon}<C_1(\prod_{i=1}^{k} |\SC_i|)^{1/(k-1)+\epsilon}$

\end{proof}

Let's prove Theorem \ref{induction}.

\begin{proof}
We would like to use induction on $n$ and $\tilde{\alpha}$. As in the proof of Theorem \ref{main}, we assume that all $\alpha_i>0$. Let's first look at some base cases.

When $n=2$, $k$ must be $2$. Thus from the definition of joints and B\'{e}zout Theorem we see that the number of joints is bounded by $C|\SC_1||\SC_2|=C(|\SC_1||\SC_2|)^{\frac{1}{k-1}}$.
	
When $\tilde{\alpha}=0$, the number of points on the $0$-dimensional variety $V$ is bounded by $C$. So the statement is automatically true. 

Suppose the statement is true when the values of $n$ and $\tilde{\alpha}$ are smaller (when both of them are at most what they are now and at least one of them is strictly smaller). We want to prove that there is $C_0$ that only depends on $n$, $m$, $d$, $\tilde{d}$ and $\epsilon$ such that the number of joints is bounded by $C_0(\prod_{i=1}^{k} |\SC_i|)^{1/(k-1)+\epsilon}$. We use $C$ to denote any other number that only depends on $n$, $m$, $d$, $\tilde{d}$ and $\epsilon$.
 
First, from induction hypothesis and Proposition \ref{full tangent space} we only have to consider joints that are smooth points of $V$ and the real tangent spaces of $V$ at those points are $\tilde{\alpha}$-dimensional.   

If $x$ is a joint formed by $S_i\in \SC_i$ and it is a smooth point of $V$, then from the definition of joints we see that at least one of $S_i$ is not contained in $V$. Let's focus on the case where $S_1$ is not contained in $V$. There are $k-1$ other cases, but $k$ is at most $n$ and we can bound those cases similarly and add them up.

Now we consider intersections of varieties in $\SC_1$ (that are not contained in $V$) with $V$. Their intersections are some algebraic sets of dimension smaller than $\alpha_1$. We use Proposition \ref{D controls C}, \ref{C controls D} and \ref{full tangent space} to cover those algebraic sets by the smooth points of $C|\SC_1|$ varieties defined by at most $C$ polynomials each of degree at most $C$. Let's denote those varieties of dimension $q$ by $\SC_1^q$, $0\leq q\leq \alpha_1-1$. Suppose $x$ is a joint formed by $S_i$ and $S_1$ is not contained in $V$. Then $x\in S_1\cap V$ and $x$ must be a smooth point of some variety $S_1^q$ in $\SC_1^q$ for some $q$. Now we project everything down to $\R^{n-\alpha_1+q}$ generically (we denote the projection by $\pi$). We would like to claim that $\pi(x)$ is a joint formed by $\pi(S_1^q), \pi(S_2),\dots \pi(S_k)$, but there is again a technical issue. The projection of a variety might not be a variety anymore. Thus we need to use Proposition \ref{projection} to cover the projection of a variety with $C$ varieties of equal or smaller dimension and degree $C$. In this way each family of varieties, $\SC_1^q, \SC_2,\dots, \SC_k$, becomes a new family of varieties in $\R^{n-\alpha_1+q}$. We denote them by (with an abuse of notation) $\pi(\SC_1^q),\pi(\SC_2),...,\pi(\SC_k)$. We see that $\pi(x)$, which is the projection of $x$, must be a joint formed by some variety in $\pi(\SC_1^q)$ that has dimension $q$ and some varieties in $\pi(\SC_i)$ that have dimension $\alpha_i$, $i\geq 2$. Thus to bound the number of joints like $x$, we only need to bound the number of joints formed by varieties in $\pi(\SC_1^q)$ that have dimension $q$ and varieties in $\pi(\SC_i)$ that have dimension $\alpha_i$. Now we can use induction hypothesis since we are bounding the number of joints formed by varieties in $\R^{n-\alpha_1+q}$. The number of varieties in $\pi(\SC_1^q)$ that have dimension $q$ is bounded by $C|\SC_1|$ and the number of varieties in $\SC_i$ that have dimension $\alpha_i$ is bounded by $C|\SC_i|$. We have proven that Theorem \ref{induction} would imply Theorem \ref{main}; in particular, if Theorem \ref{induction} is true when the dimension of our ambient space is no greater than $n-\alpha_1+q$, then Theorem \ref{main} is true in the same ambient space. Thus our induction hypothesis will imply that the number of joints formed by varieties in $\pi(\SC_1^q)$ that have dimension $q$ and varieties in $\pi(\SC_i)$ that have dimension $\alpha_i$ can be bounded by $C(\prod_{i=1}^{k} |\SC_i|)^{\frac{1}{k-1}+\epsilon}$. Here the constant $C$ might depend on the following parameters: the dimension $n$, the number and degree of polynomials needed to define our varieties in $\pi(\SC_1^q)$ and $\pi(\SC_i)$, and $\epsilon$. Notice that the number and degree of polynomials needed to define our varieties might be bigger than $m$ and $d$ (because we considered intersections and projections), but they are bounded by $C$ (a number depending on $n$, $m$, $d$ and $\tilde{d}$). Now we can bound the number of joints in the other $k-1$ cases (where varieties from $\SC_i$ are not contained in $V$ for $i\geq 2$) similarly and the total number of joints is still bounded by $C(\prod_{i=1}^{k} |\SC_i|)^{\frac{1}{k-1}+\epsilon}$. We can choose $C_1$ to be much greater than this particular $C$ and we are done.
\end{proof}
 
Now let's prove Theorem \ref{multijoints with multiplicities}. The argument is very similar to the one used in the proof of Theorem \ref{main}.

\begin{proof}[Proof of Theorem \ref{multijoints with multiplicities}]
We use induction on the dimension $n$ and the number of lines in each family. Let's look at the base cases first. 

When $n=2$, the left-hand side of the inequality is the number of intersections between lines in $\LC_1$ and lines in $\LC_2$ which is at most $L_1L_2$. So the statement is true. When one of $L_i$ is smaller than $100$, WLOG we assume that $L_1\leq 100$. If $x$ is a joint, then $x$ must lie on one of the lines in $\LC_1$. The number of intersections between lines in $\LC_1$ and lines in $\LC_i$ is at most $100L_i$ for $2\leq i\leq n$. Thus $\sum\limits_{x} N_i(x)\leq 100L_i$ for $2\leq i\leq n$. Now we can use H\"{o}lder's inequality to bound the left-hand side:
\begin{align*}
& \sum\limits_{x}(\prod\limits_{1\leq i\leq n}N_i(x))^{\frac{1}{n-1}} \\
\leq & L_1^{\frac{1}{n-1}}
\sum\limits_{x}(\prod\limits_{2\leq i\leq n}N_i(x))^{\frac{1}{n-1}} \\ 
\leq & L_1^{\frac{1}{n-1}}\prod\limits_{2\leq i\leq n}(\sum\limits_{x}N_i(x))^{\frac{1}{n-1}}\\
\leq & L_1^{\frac{1}{n-1}}\prod\limits_{2\leq i\leq n}(100L_i)^{\frac{1}{n-1}}\\
\leq & 100\prod\limits_{1\leq i\leq n}(L_i)^{\frac{1}{n-1}}
\end{align*}
Thus the statement is true in this case.

Suppose the statement is true when the dimension $n$ is smaller, or is the same and the number of lines in each family is smaller. Theorem \ref{BBZ} enables us to find a partitioning polynomial $P$ of degree $D$ s.t. each connected component $O_i$ of $\R^n\backslash Z(P)$ intersects at most $CL_iD^{1-n}$ lines from $\LC_i$. Now we can use induction hypothesis to control the sum over joints that are in $O_i$:
\[ \sum\limits_{x\in O_i}(\prod\limits_{1\leq i\leq n}N_i(x))^{\frac{1}{n-1}}
\leq C_1(\prod\limits_{1\leq i\leq n}(CL_iD^{1-n}))^{\frac{1}{n-1}+\epsilon}
\leq C_1C(\prod\limits_{1\leq i\leq n}L_i)^{\frac{1}{n-1}+\epsilon}D^{-n-n(n-1)\epsilon}  
\]
There are less than $CD^n$ cells in total, so the contribution of joints in cells to the left hand side is at most $C_1CD^{-n(n-1)\epsilon}(\prod\limits_{1\leq i\leq n}L_i)^{\frac{1}{n-1}+\epsilon}$. As long as we choose $D$ big enough (depending on $C$ and $\epsilon$), this is smaller than $\frac{1}{2}C_1(\prod\limits_{1\leq i\leq n}L_i)^{\frac{1}{n-1}+\epsilon}$.

Now let's look at $\sum\limits_{x\in Z(P)}(\prod\limits_{1\leq i\leq n}N_i(x))^{\frac{1}{n-1}}$. We use Proposition \ref{full tangent space} to decompose $Z(P)$ into smooth points of $C_2$ varieties $V_i$ of dimension at most $n-1$ and degree at most $C$. We denote the set of lines in $\LC_i$ that pass $x$ by $\LC_i(x)$. If $x$ is a smooth point of $V_1$, we claim that there exists an $i$ s.t. no line in $\LC_i(x)$ is contained in $V_1$. This is because otherwise we could pick one line from each $\LC_i(x)$ and they are all contained in the tangent space of $V_1$ at $x$, violating our assumption that $\LC_1,\dots, \LC_n$ are transversal collections of lines. We first look at the set of joints where lines in $\LC_1(x)$ are not contained in $V_1$ and denote it by $J_1$. Then $\sum\limits_{x\in J_1}N_1(x)\leq CL_1$ because a line that is not contained in $V_1$ can have at most $C$ intersections with it. Now we use H\"{o}lder's inequality and have 
\[
 \sum\limits_{x\in J_1}(\prod\limits_{1\leq i\leq n}N_i(x))^{\frac{1}{n-1}} \leq
 ((\sum\limits_{x\in J_1}N_1(x))(\sum\limits_{x\in J_1}(\prod\limits_{2\leq i\leq n}N_i(x))^{\frac{1}{n-2}})^{n-2})^{\frac{1}{n-1}}
\]

We can bound the second factor $\sum\limits_{x\in J_1}(\prod\limits_{2\leq i\leq n}N_i(x))^{\frac{1}{n-2}}$ by projecting lines in $\LC_i$ ($2\leq i\leq n$) generically to $\R^{n-1}$ and using our induction hypothesis when the dimension is $n-1$. 
\[
\sum\limits_{x\in J_1}(\prod\limits_{2\leq i\leq n}N_i(x))^{\frac{1}{n-2}}\leq C(\prod\limits_{2\leq i\leq n}L_i)^{\frac{1}{n-2}+\epsilon}
\]
Thus we have 
\begin{align*}
& \sum\limits_{x\in J_1}(\prod\limits_{1\leq i\leq n}N_i(x))^{\frac{1}{n-1}} \\ \leq
& ((\sum\limits_{x\in J_1}N_1(x))(\sum\limits_{x\in J_1}(\prod\limits_{2\leq i\leq n}N_i(x))^{\frac{1}{n-2}})^{n-2})^{\frac{1}{n-1}} \\ \leq
& (CL_1(C(\prod\limits_{2\leq i\leq n}L_i)^{\frac{1}{n-2}+\epsilon})^{n-2})^{\frac{1}{n-1}} \\ \leq
& C(\prod\limits_{1\leq i\leq n}L_i)^{\frac{1}{n-1}+\frac{n-2}{n-1}\epsilon} \\ \leq
& \frac{1}{2nC_2}C_1(\prod\limits_{1\leq i\leq n}L_i)^{\frac{1}{n-1}+\epsilon}
\end{align*}
The last inequality holds when we choose $C_1$ to be greater than $2nCC_2$.

There are also contributions from $J_i$ ($2\leq i\leq n$), so the sum taken over all joints that are smooth points of $V_1$ is bounded by $n$ times the above bound, which is $\frac{1}{2C_2}C_1(\prod\limits_{1\leq i\leq n}L_i)^{\frac{1}{n-1}+\epsilon}$. There are $C_2$ varieties in total, so the contributions from joints on $Z(P)$ is bounded by $\frac{1}{2}C_1(\prod\limits_{1\leq i\leq n}L_i)^{\frac{1}{n-1}+\epsilon}$. Since we have already bounded the contributions from joints in cells by $\frac{1}{2}C_1(\prod\limits_{1\leq i\leq n}L_i)^{\frac{1}{n-1}+\epsilon}$, the induction is closed.

\end{proof}

\end{document}